\documentclass[12pt]{amsart}
\usepackage{amsmath,amsfonts,amssymb,amscd,amsthm,amsbsy,epsf}
\textwidth=6.25truein
\textheight=8.5truein
\hoffset=-.5truein
\voffset=-.5truein
\footskip=.5truein
\newtheorem{thm}{Theorem}[section]

\newtheorem{cor}[thm]{Corollary}
\newtheorem{prop}[thm]{Proposition}
\theoremstyle{definition} 		
\newtheorem{defn}[thm]{Definition}
\newtheorem{remark}[thm]{Remark}
\newtheorem*{note}{Notation}

\def\nat{{\mathbb N}}
\def\zat{{\mathbb Z}}
\def\qat{{\mathbb Q}}

\def\D{{\mathcal D}}

\def\L{{\mathcal L}}

\begin{document}
\title{rational dynamical systems}
\author{Andreas Koutsogiannis}

\begin{abstract}
We introduce the notion of a rational dynamical system extending the classical notion of a topological dynamical system and we prove (multiple) recurrence results for such systems via a partition theorem for the rational numbers proved by Farmaki and the author in [FK]. In particular, we extend classical recurrence results developed by Furstenberg and Weiss. Also, we give some applications of these topological recurrence results to topology, to combinatorics, to diophantine approximations and to number theory.
\end{abstract}

\keywords{topological dynamics, recurrence, partition theorems, rational numbers}
\subjclass{Primary 37Bxx; 54H20}

\maketitle
\baselineskip=18pt
\pagestyle{plain}


\section*{Introduction}

In 1927, Birkhoff proved (in [Bi]) that every topological dynamical system $(X,T),$ where $X$ is a compact metric space and $T:X\rightarrow X$ is a continuous map, has a recurrent element, which means that there exist $x\in X$ and a sequence of natural numbers $(\alpha_n)_{n\in \nat}\subseteq \nat$ with $\alpha_n\rightarrow \infty$ such that $T^{\alpha_n}(x)\rightarrow x.$ The multiple version of Birkhoff's recurrence theorem is due to Furstenberg and Weiss:

\begin{thm}([FuW], 1978)\label{thm:A}
If $X$ is a compact metric space and $T_1,\ldots,T_l$ are commuting continuous maps of $X$ to itself, then there exists a point $x\in X$ and a sequence $(\alpha_n)_{n\in \nat}\subseteq \nat,$ $\alpha_n\rightarrow \infty$ such that $T_i^{\alpha_n}(x)\rightarrow x$ simultaneously for $1\leq i\leq l$ (in this case, $x$ is said to be a \textit{multiple recurrent point} for $T_1,\ldots,T_l$).
\end{thm}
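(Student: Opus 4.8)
The plan is to run the topological argument of [FuW], in three parts. \emph{Reductions.} By Zorn's lemma --- a nested family of nonempty closed subsets of the compact space $X$ has nonempty closed intersection, still invariant under every $T_i$ --- one may assume $(X,T_1,\dots,T_l)$ is \emph{minimal}. In a minimal system every $T_i$ is surjective ($T_i(X)$ is closed and, by commutativity, invariant under all $T_j$), so one may pass to the natural extension
$\widehat{X}=\{(x_{\mathbf{g}})_{\mathbf{g}\in\zat^l}\in X^{\zat^l}:\ T_ix_{\mathbf{g}}=x_{\mathbf{g}+\mathbf{e}_i}\ \text{for all }i,\mathbf{g}\}$,
a nonempty closed subspace (nonemptiness uses surjectivity) on which the coordinate shifts $\widehat{T}_i$ are commuting \emph{homeomorphisms} and which admits a factor map $\pi\colon\widehat{X}\to X$ with $\pi\widehat{T}_i=T_i\pi$; a multiple recurrent point upstairs projects to one downstairs. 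So I assume from now on that $X$ is minimal and all the $T_i$ are homeomorphisms.

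\emph{Finitary multiple recurrence, by induction on $l$.} I would prove the statement $(\star_l)$: for every compact metric space, all commuting continuous $T_1,\dots,T_l$, and every $\varepsilon>0$, there are $x\in X$ and $n\ge1$ with $d(T_i^nx,x)<\varepsilon$ for all $i$; for $l=1$ this is Birkhoff's theorem [Bi]. For the step $(\star_{l-1})\Rightarrow(\star_l)$, reduce as above and put $S_i:=T_iT_l^{-1}$ for $1\le i\le l-1$; these are $l-1$ commuting homeomorphisms, and $(\star_{l-1})$ applied to them yields, for each $\delta>0$, a point $w$ and an $n\ge1$ with $\operatorname{diam}\{T_1^nw,\dots,T_l^nw\}<2\delta$ --- a ``collapse'' of the $l$ orbits. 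To promote a collapse to genuine $\varepsilon$-recurrence I would run the van der Waerden focusing scheme: for $\delta$ small and a target $z\in X$, build a chain $z=y_0,y_1,y_2,\dots$ and times $n_1,n_2,\dots\ge1$ so that $T_1^{n_j},\dots,T_l^{n_j}$ simultaneously carry a point $\delta$-close to $y_j$ to within $\delta$ of $y_{j-1}$; each step exists by the collapse statement together with minimality of $X$, which lets one slide a collapsing configuration so as to lie near the prescribed $y_{j-1}$ (a finite-subcover argument controls the moduli of continuity of the finitely many sliding maps involved). When the chain is long enough, pigeonhole gives $j<j'$ with $y_j$ and $y_{j'}$ very close, and then $m:=n_{j+1}+\dots+n_{j'}$, applied to a point near $y_{j'}$, witnesses $(\star_l)$ at a scale controlled by $\delta$.

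\emph{From $(\star_l)$ to a genuine recurrent point.} Knowing $(\star_l)$ for every compact system (hence for every closed invariant subsystem), and with $X$ minimal, I would form inside $X^l$ the orbit closure $Y:=\overline{\bigcup_{n\ge0}\widetilde{T}^{n}(\Delta)}$ of the diagonal $\Delta=\{(x,\dots,x):x\in X\}$ under $\widetilde{T}:=T_1\times\dots\times T_l$, and take (by Zorn) a minimal nonempty closed $\widetilde{T}$-invariant subset $M\subseteq Y$. The crux is that $M$ meets $\Delta$: each coordinate projection maps $M$ onto $X$ by minimality of $X$, while $(\star_l)$ forces the forward $\widetilde{T}$-orbit of $\Delta$ back arbitrarily close to $\Delta$, so $M$ cannot avoid a neighbourhood of $\Delta$. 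A point $(x^*,\dots,x^*)\in M\cap\Delta$ is then uniformly recurrent for $\widetilde{T}$, so $\widetilde{T}^{\alpha_n}(x^*,\dots,x^*)\to(x^*,\dots,x^*)$ for a suitable $\alpha_n\to\infty$, i.e.\ $T_i^{\alpha_n}x^*\to x^*$ simultaneously for all $i$.

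I expect the real fight to be the focusing construction in the second part together with the ``$M$ meets $\Delta$'' step in the third: that is the genuine combinatorial content. (One can instead bypass the induction entirely: $(\star_l)$ is essentially equivalent to the multidimensional van der Waerden / Gallai--Witt theorem via the correspondence between clopen covers of a subshift and finite colourings, and granting Gallai--Witt it follows at once --- colour $\mathbf{v}\in\nat^l$ by which member of a fixed finite open cover of $X$ contains $T_1^{v_1}\cdots T_l^{v_l}x_0$, and extract a monochromatic homothetic copy of $\{\mathbf{0},\mathbf{e}_1,\dots,\mathbf{e}_l\}$.)
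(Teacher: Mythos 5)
First, a framing remark: the paper contains no proof of this statement --- Theorem~\ref{thm:A} is quoted from [FuW] as background, and the paper only proves rational-system analogues (Theorems~\ref{thm:olem1}, \ref{thm:olem2}, \ref{thm:olem4}). That said, your architecture --- reduce to a minimal system of homeomorphisms, prove the finitary statement $(\star_l)$ by induction using $S_i=T_iT_l^{-1}$ and a van der Waerden focusing chain, then upgrade to a genuine recurrent point --- is the correct Furstenberg--Weiss scheme, and it is essentially the scheme the paper runs for its own theorems (the three-step induction and the nested-open-set chain in the joint proof of Theorems~\ref{thm:olem1} and \ref{thm:olem2}, and a Baire-category upgrade for Theorem~\ref{thm:olem4}). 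Two small repairs in parts 1--2: after passing to the natural extension you must take a minimal subsystem again, since minimality is not inherited by the extension; and the focusing chain cannot run with a single fixed $\delta$ --- the radius of the ball around $y_j$ that $T_i^{n_j}$ maps into $B(y_{j-1},\delta)$ depends on the modulus of continuity of $T_i^{n_j}$, hence on $n_j$, so you need shrinking radii $\delta_j$, equivalently the nested condition for \emph{all} pairs $i<j$ as in the paper's condition $(\ast\ast)$. Both are routine.

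The genuine gap is the step ``$M$ meets $\Delta$'' in part 3: neither reason you offer establishes it. First, the coordinate projection $\pi_j(M)$ is only $T_j$-invariant, while $X$ is minimal for the group generated by \emph{all} the $T_i$, not for $T_j$ alone; so $\pi_j(M)=X$ does not follow. Second, $(\star_l)$ shows that the orbit closure $Y$ of $\Delta$ contains points arbitrarily close to $\Delta$ --- indeed $\Delta\subseteq Y$ trivially --- but it says nothing about the particular minimal subset $M\subseteq Y$, which could a priori stay at positive distance from $\Delta$. Closing this requires the homogeneity machinery of [Fu] (Proposition 2.5 there): the diagonal group $H=\{g\times\cdots\times g\}$, $g$ ranging over the group generated by the $T_i$, commutes with $\widetilde{T}$, preserves $Y$, and acts minimally on $\Delta$, and one then shows that $\inf\{d(\widetilde{T}^nx,\Delta):x\in\Delta,\ n\geq 1\}=0$ forces a $\widetilde{T}$-recurrent point inside $\Delta$. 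Alternatively you can drop part 3 entirely and argue as the paper does for Theorem~\ref{thm:olem4}: given $(\star_l)$ on every minimal subsystem, the sets $F_s$ of points admitting a common $\frac{1}{s}$-return time are open and, by the finite-subcover and uniform-continuity argument, dense, so Baire's theorem produces a point of $\bigcap_s F_s$, which is multiply recurrent with return times tending to infinity. Note finally that the Gallai--Witt shortcut in your closing parenthesis only yields $(\star_l)$, not the full conclusion with a single $x$ and $\alpha_n\to\infty$, so it does not bypass this step either.
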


A consequence (in fact an equivalent form) of Theorem~\ref{thm:A} is the following:

\begin{thm}\label{thm:B} Let $l\in \nat$ and $\varepsilon>0.$ If $X$ is a compact metric space and $T_1,\ldots,T_l$ are commuting continuous maps of $X$ to itself, then there exists $x_0\in X$ and $n_0\in \nat,$ such that $T_i^{n_0}(x_0)\in B(x_0,\varepsilon)$ for every $1\leq i\leq l.$
\end{thm}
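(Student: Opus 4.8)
The plan is to deduce Theorem~\ref{thm:B} directly from Theorem~\ref{thm:A}; this is the ``easy'' half of the asserted equivalence. Given $l \in \nat$, $\varepsilon > 0$, the compact metric space $X$ and the commuting continuous maps $T_1, \ldots, T_l$, I would first invoke Theorem~\ref{thm:A} to obtain a multiple recurrent point $x \in X$ together with a sequence $(\alpha_n)_{n \in \nat} \subseteq \nat$ with $\alpha_n \to \infty$ and $T_i^{\alpha_n}(x) \to x$ as $n \to \infty$, simultaneously for every $1 \leq i \leq l$.

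The remaining step is purely a matter of unwinding the definition of convergence for finitely many sequences at once. For each fixed $i$, the convergence $T_i^{\alpha_n}(x) \to x$ gives a threshold $N_i$ such that $d(T_i^{\alpha_n}(x), x) < \varepsilon$ for all $n > N_i$, where $d$ denotes the metric on $X$. Taking $N = \max\{N_1, \ldots, N_l\}$ (this is the only place where finiteness of $l$ is used) and picking any $n > N$, all $l$ inequalities hold for the single common scale $\alpha_n$. Setting $x_0 := x$ and $n_0 := \alpha_n$ then yields $T_i^{n_0}(x_0) \in B(x_0, \varepsilon)$ for every $1 \leq i \leq l$, which is exactly the claim; and since $\alpha_n \to \infty$ we may take $n$ large enough that $\alpha_n \geq 1$, so $n_0 \in \nat$ as required.

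There is essentially no obstacle in this direction: all the substance is carried by Theorem~\ref{thm:A}, and what is left is the elementary observation that finitely many sequences converging to a common limit eventually all lie within $\varepsilon$ of that limit. (By contrast, the converse implication — recovering from the single-scale statement of Theorem~\ref{thm:B} a \emph{fixed} point $x$ and a sequence $\alpha_n \to \infty$ working simultaneously for all $i$ — is the nontrivial half of the equivalence, requiring a diagonalization over a sequence $\varepsilon_k \to 0$ together with a compactness/minimality argument on $X$; that direction is not needed for the statement at hand.)
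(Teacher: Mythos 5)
Your proposal is correct and matches the paper's intent: the paper states Theorem~\ref{thm:B} as ``a consequence (in fact an equivalent form)'' of Theorem~\ref{thm:A} without writing out the deduction, and your argument supplies exactly the routine step that is implied, namely taking the multiple recurrent point and sequence from Theorem~\ref{thm:A} and choosing $n$ beyond the maximum of the finitely many convergence thresholds. Nothing further is needed.
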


In fact, Theorem~\ref{thm:B} can be considered as the topological dynamics version of Gallai's partition theorem (see [GRS]), namely that for $l\in \nat$ and any finite partition of $\nat^l,$ one of the cells of the partition contains an affine image of any finite subset of $\nat^l$ (an \textit{affine image} of a finite subset $F$ of $\nat^l$ is any set of the form $\alpha+\beta F$ where $\alpha\in \nat^l,\beta\in \nat$). We note that Gallai's theorem is the multidimensional extension of van der Waerden's theorem ([vdW], 1927), that for any finite partition of the set of natural numbers there exists a cell of the partition which contains arbitrary long arithmetic progressions, which is a (perhaps the most) fundamental result in Ramsey theory.

 Our starting point for this paper is a partition theorem for the set of rational numbers (Theorem~\ref{thm:000} below) proved in [FK], which can be characterized as a strong van der Waerden theorem for the set of rational numbers. This theorem follows from a more general partition theorem for words (in [FK]), using the fact that a rational number can be represented as a word, as, according to a result of Budak-I\c{s}ik-Pym (in [BIP]), a rational number $q$ has a unique expression in the form $$q=\sum^{\infty}_{s=1}q_{-s}\frac{(-1)^{s}}{(s+1)!}\;+\;\sum^{\infty}_{r=1}q_{r}(-1)^{r+1}r! $$ where $(q_n)_{n \in \mathbb{Z}^\ast}\subseteq \nat\cup\{0\}$ with $\;0\leq q_{-s}\leq s$ for every $s>0$, $ 0\leq q_r\leq r$ for every $r> 0$ and $q_{-s}=q_r=0$ for all but finite many $r,s.$

Extending the classical notion of the topological dynamical system we introduce the notion of a rational dynamical system (Definition~\ref{d.syst}). Consequently we develop a recurrence theory for rational dynamical systems, extending the fundamental results of Furstenberg and Weiss for dynamical systems indexed by natural numbers ([Fu], [FuW]) stated above. We remark that the presented recurrence results for rational systems are stronger than those that follows from the more general recurrence results concerning topological dynamical systems indexed by words presented in [FK2]. Specifically:

(1) We obtain a topological van der Waerden theorem for the set of rational numbers (Theorem~\ref{thm:ole}) and its multiple version (Theorem~\ref{thm:olem1}) extending Theorem~\ref{thm:B} to rational dynamical systems.

(2) Introducing the minimal rational systems and characterizing them as systems having only uniformly rational recurrent points we prove, in Theorem~\ref{thm:olem2}, a strong recurrence property of minimal rational dynamical systems giving an equivalent reformulation of Theorem~\ref{thm:olem1}.

(3) We obtain a strengthening of Theorem~\ref{thm:A} for rational dynamical systems in Theorem~\ref{thm:olem4} and we prove that it is an equivalent expression of Theorems~\ref{thm:olem1} and ~\ref{thm:olem2}.

Also, we present some applications of the previously mentioned results to topology (Theorems~\ref{thm:cor2} and ~\ref{cor:olem2}), to combinatorics (Theorem~\ref{thm:n12}, Theorem~\ref{thm:olem3}, Corollary~\ref{prop:n11}), to diophantine approximations and to number theory (applications of Theorems~\ref{thm:cor2} and ~\ref{cor:olem2}). For example, as an application of Theorem~\ref{thm:olem1} we get a strong Gallai-type partition theorem for the set of rational numbers (Theorem~\ref{thm:olem3}).

\medskip

We will use the following notation.

\begin{note}
Let $\nat=\{1,2,\ldots\}$ be the set of natural numbers, $\mathbb{Z}=\{\ldots,-2,-1,0,1,2,\ldots\}$ the set of integer numbers, $\qat=\{\frac{m}{n}:m\in \zat,\;n\in \nat\}$ the set of rational numbers and $\zat^-=\{-n:n\in \nat\},$ $\zat^\ast=\zat\setminus\{0\},$ $\qat^{\ast}=\qat\setminus\{0\}.$ We denote by $[X]_{>0}^{<\omega}$ the set of all the non-empty finite subsets of $X$.

For a sequence  $(x_n)_{n\in \nat}$ of real numbers we set $\;FS\big[(x_n)_{n\in \nat}\big]=\{ \sum_{n\in F}x_n:\;F\in [\nat]_{>0}^{<\omega}\}.$
\end{note}

\section{A Topological van der Waerden-type theorem \\ for the set of rational numbers}

We introduce the notion of a simple rational dynamical system defined from a compact metric space $X$ and a sequence $\{T_n\}_{n\in \zat^\ast}$ of continuous functions from $X$ to itself (Definition~\ref{h.syst}). We prove a recurrence theorem for such systems in Theorem~\ref{thm:ole}, extending the analogous result of Furstenberg and Weiss (Theorem~\ref{thm:B}, case $l=1$). Theorem~\ref{thm:ole} is an implication of a strengthened van der Waerden-type theorem for the set of rational numbers proved in [FK] (see Theorem~\ref{thm:000} below). The inverse implication is partially correct since Theorem~\ref{thm:ole} implies a weaker form of Theorem~\ref{thm:000}, which actually can be considered as a van der Waerden-type theorem for the set of rational numbers.

According to \cite{BIP} every rational number $q$  has a unique expression in the form $$q=\sum^{\infty}_{s=1}q_{-s}\frac{(-1)^{s}}{(s+1)!}\;+\;\sum^{\infty}_{r=1}q_{r}(-1)^{r+1}r! $$ where $(q_n)_{n \in \mathbb{Z}^\ast}\subseteq \nat\cup\{0\}$ with $\;0\leq q_{-s}\leq s$ for every $s>0$, $ 0\leq q_r\leq r$ for every $r> 0$ and $q_{-s}=q_r=0$ for all but finite many $r,s$. So, for every non-zero rational number $q$, there exists a unique $l\in \nat,\;\{t_1<\ldots<t_l\}=dom(q)\in [\zat^\ast]^{<\omega}_{>0}$ and $\{q_{t_1},\ldots,q_{t_l}\}\subseteq \nat$ with $1\leq q_{t_i}\leq -t_i$ if $t_i<0$ and $1\leq q_{t_i}\leq t_{i}$ if $t_i> 0$ for every $1\leq i\leq l,$ such that defining $dom^-(q)=\{t\in dom(q):\;t<0\}$ and $dom^+(q)=\{t\in dom(q):\;t>0\}$ to have $$ q=\sum_{t\in dom^-(q)}q_t\frac{(-1)^{-t}}{(-t+1)!}\;+\;\sum_{t\in dom^+(q)}q_t(-1)^{t+1}t!\;\;(\text{we set}\;\;\sum_{t\in\emptyset}=0).$$ Observe that $$e^{-1}-1=-\sum^{\infty}_{t=1}\frac{2t-1}{(2t)!}< \sum_{t\in dom^-(q)}q_t\frac{(-1)^{-t}}{(-t+1)!} <\sum^{\infty}_{t=1}\frac{2t}{(2t+1)!}=e^{-1}$$ and $$ \sum_{t\in dom^+(q)}q_t(-1)^{t+1}t! \in \zat^\ast\;\;\text{if}\;\;dom^+(q)\neq \emptyset.$$

For two non-zero rational numbers $q_1,q_2$ we set \begin{center} $q_1\prec q_2 \;\Longleftrightarrow\; \max dom^-(q_2)<\min dom^-(q_1)<\max dom^+(q_1)<\min dom^+(q_2).$ \end{center}

Using the previous representation of rational numbers we have the following partition theorem for the rational numbers.

\begin{thm}([FK])\label{thm:000} Let $\mathbb{Q}=Q_1\cup\ldots \cup Q_r$ for $r\in \nat$. Then, there exist $1\leq i_{0}\leq r$ and for every $n\in \nat$ a function $q_n:\{1,\ldots,n\}\times\{1,\ldots,n\}\cup\{(0,0)\}\rightarrow \mathbb{Q}$ with
$$q_n(i,j)=\sum_{t\in C_n^-}q^n_t\frac{(-1)^{-t}}{(-t+1)!} + i\sum_{t\in V_n^-}\frac{(-1)^{-t}}{(-t+1)!} +  \sum_{t\in C_n^+}q^n_t(-1)^{t+1}t!+j\sum_{t\in V_n^+}(-1)^{t+1}t!,$$
\noindent where
    $C_n^-,V_n^-\in [\zat^-]^{<\omega}_{>0},\;C_n^+,V_n^+\in [\nat]^{<\omega}_{>0}$ with $C_n^-\cap V_n^-=\emptyset=C_n^+\cap V_n^+,$ $q^n_t\in \nat$ with $1\leq q_t^n\leq -t$ for $t\in C_n^-,$ $1\leq q_t^n\leq t$ for $t\in C_n^+,$ which satisfy
 $q_n(i_n,j_n)\prec q_{n+1}(i_{n+1},j_{n+1})$ for every $n\in \nat,$ and
$$FS\big[\big(q_n(i_{n},j_{n})\big)_{n\in \nat}\big]\subseteq Q_{i_0}$$ for all $((i_n,j_n))_{n\in \nat}\subseteq \nat\times\nat\cup\{(0,0)\}$ with $0\leq i_n,j_n\leq n$ for every $n\in \nat.$
\end{thm}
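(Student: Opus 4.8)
The plan is to deduce Theorem~\ref{thm:000} from the more general partition theorem for words of [FK], using the Budak--I\c{s}\i k--Pym coding of rationals recalled above. The three ingredients are: (i) a dictionary identifying $\qat^\ast$ with a space of ``located words'' over a position-dependent alphabet; (ii) the observation that this dictionary converts addition along a $\prec$-increasing chain into concatenation of words without carrying; (iii) the word partition theorem of [FK], whose monochromatic ``combinatorial subspace'' translates back to the functions $q_n$ of the statement.

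First I would make the dictionary precise. By the BIP representation, sending a non-zero rational $q$ to the finitely supported digit sequence $(q_t)_{t\in dom(q)}$ identifies $\qat^\ast$ with the set of located words over the alphabet in which the letter at position $t\in\zat^-$ ranges over $\{1,\dots,-t\}$ and the letter at position $t\in\nat$ ranges over $\{1,\dots,t\}$ (an absent position being recorded as the digit $0$). The crucial point is the \emph{no-carrying lemma}: if $q_1\prec\cdots\prec q_k$, then the four domain blocks $dom^-(q_k),\dots,dom^-(q_1),dom^+(q_1),\dots,dom^+(q_k)$ are pairwise disjoint, so the digit sequence of $q_1+\cdots+q_k$ is simply the union of the digit sequences of the $q_i$; since each digit still obeys its own bound, this union is again a legitimate BIP representation (in particular its negative part automatically lies in $(e^{-1}-1,e^{-1})$ and its positive part, if nonempty, is a nonzero integer). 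Hence $FS\big[(q_n)_{n\in\nat}\big]$ for a $\prec$-chain $(q_n)_{n\in\nat}$ corresponds exactly to taking subwords supported on a union of blocks.

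Next I would invoke the word partition theorem of [FK]. Transporting the finite partition $\qat=Q_1\cup\cdots\cup Q_r$ to a finite coloring of located words via the dictionary and applying [FK] yields a color $i_0$ together with, for each $n$, a located variable word having two designated families of variable positions --- one on the negative side ($V_n^-$) carrying a variable to be instantiated by $i$, one on the positive side ($V_n^+$) carrying a variable to be instantiated by $j$ --- and a family of constant positions ($C_n^-,C_n^+$) with fixed digits $q^n_t$, arranged in consecutive blocks so that the $n$-th word sits strictly inside the $(n{+}1)$-st in the $\prec$-sense, and with the property that every located word obtained by concatenating instantiations of these variable words has color $i_0$. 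Reading this back through the dictionary gives precisely the $q_n$ of the statement: $q_n(i,j)$ is the rational whose digit sequence instantiates the negative variable to $i$ and the positive variable to $j$, the value $(i,j)=(0,0)$ corresponding to deleting all variable occurrences (the constant-only word). The block arrangement yields $q_n(i_n,j_n)\prec q_{n+1}(i_{n+1},j_{n+1})$, and the monochromaticity of the combinatorial subspace, combined with the no-carrying lemma, yields $FS\big[(q_n(i_n,j_n))_{n\in\nat}\big]\subseteq Q_{i_0}$.

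The step requiring most care --- and the place where the exact formulation of the [FK] word theorem matters --- is the handling of the position-dependent alphabet together with the two-sidedness of the coding. Because the alphabet available at position $t$ has only $|t|$ letters, the $n$-th variable word may use letters only up to $n$ at its variable positions, forcing $V_n^-\subseteq\{-n,-n-1,\dots\}$ and $V_n^+\subseteq\{n,n+1,\dots\}$ and constraining the instantiation parameters by $0\le i_n,j_n\le n$; one must check that [FK]'s theorem is stated in the located, growing-alphabet form that supplies exactly these constraints (and thereby the bounds $1\le q^n_t\le -t$, $1\le q^n_t\le t$). Moreover a rational carries an independent word on each of the two sides $\zat^-$ and $\nat$, so one must either use a genuinely two-sided version of the word theorem or apply the one-sided version to the product of the two alphabets and then separate coordinates, in each case verifying that the $\prec$-interleaving of the blocks (negative blocks nested downward, positive blocks nested upward) is preserved. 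Once this bookkeeping is aligned with the hypotheses of the [FK] theorem the translation is routine; alternatively one could prove the statement directly by running the Furstenberg--Katznelson/Carlson-type argument for variable words (via idempotent ultrafilters on the semigroup of located words under concatenation) inside this coding, but reusing [FK] is the economical route.
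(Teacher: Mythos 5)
The paper gives no proof of this statement: it is imported verbatim from [FK], with only the remark that it ``follows from a more general partition theorem for words'' via the Budak--I\c{s}\i k--Pym digit representation. Your outline --- coding rationals as located words over the position-dependent alphabet, using the no-carrying property of $\prec$-chains to turn finite sums into concatenations, and pulling back the monochromatic combinatorial subspace from the word theorem --- is exactly the derivation route the paper attributes to [FK], so it matches the paper's approach.
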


\begin{defn}\label{h.syst} Let $X$ be a compact metric space and $\{T_n\}_{n\in \zat^\ast}$ a family of commuting continuous maps of $X$ to itself. For every non-zero rational number $q$ with $dom(q)=\{t_1<\ldots<t_l\},$ we define $\mathcal{T}^q(x)=T_{t_1}^{q_{t_1}}\ldots T_{t_l}^{q_{t_l}}(x)$ and $\mathcal{T}^0(x)=x$ for every $x\in X.$ We say that $\mathcal{T}=(\mathcal{T}^q)_{q\in \qat}$ is a \textit{rational indexed family defined from the continuous maps} $\{T_n\}_{n\in \zat^\ast}$ of $X$ and $(X,\mathcal{T})$ is a \textit{simple rational dynamical system}.
\end{defn}

 \begin{remark}\label{rem:000} For a rational indexed family $\mathcal{T}=(\mathcal{T}^q)_{q\in \qat}$ we have in general  $\mathcal{T}^{p+q}\neq \mathcal{T}^p\mathcal{T}^q$ but if $p,q\in \qat^{\ast}$ with $q\prec p,$ then $\mathcal{T}^{p+q}= \mathcal{T}^p\mathcal{T}^q.$
\end{remark}

Using Theorem~\ref{thm:000} we can prove a recurrence result for simple rational dynamical systems extending the analogous result of Furstenberg and Weiss (Theorem~\ref{thm:B}, case $l=1$).

\begin{thm}\label{thm:ole}
Let $(X,\mathcal{T})$ be a simple rational dynamical system, $k\in \nat$ and $k_1<k_2$ be arbitrary real numbers. Then, for every $\varepsilon>0$ there exist
$\beta \in \mathbb{Q}\setminus\zat,\; \gamma\in \zat^\ast$ with $dom^+(\beta)=\emptyset=dom^-(\gamma),$ $\max dom^-(\beta)<k_1<k_2<\min dom^+(\gamma)$ and $x_0\in X$ such that
\begin{center} $\mathcal{T}^{p\beta+q\gamma}(x_0)\in B(x_0,\varepsilon)$ for every $0\leq p,q\leq k.$ \end{center}
\end{thm}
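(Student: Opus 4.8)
The plan is to deduce the theorem from the partition Theorem~\ref{thm:000} by the standard colouring device, reading the conclusion off a single sufficiently far-out ``block'' of the homogeneous configuration produced there. The only twist is that, since the admissible indices $(i_n,j_n)$ of Theorem~\ref{thm:000} must be either both $0$ or both positive, the base point has to be shifted by $q_n(1,1)$ rather than by $q_n(0,0)$.

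First I would fix $\varepsilon>0$, cover $X$ by finitely many open balls $B_1,\dots,B_r$ of radius $\varepsilon/3$ (possible since $X$ is compact), fix an arbitrary point $x\in X$, and define a partition $\mathbb{Q}=Q_1\cup\dots\cup Q_r$ by placing a rational $q$ into $Q_i$ for the least $i$ with $\mathcal{T}^q(x)\in B_i$. Applying Theorem~\ref{thm:000} to this partition then yields an index $i_0$ and, for every $n\in\mathbb{N}$, non-empty sets $C_n^{\pm},V_n^{\pm}$ and digits $q^n_t$ in the prescribed ranges, giving the functions $q_n(i,j)$ of its statement, so that $q_n(i_n,j_n)\prec q_{n+1}(i_{n+1},j_{n+1})$ and $FS\big[\big(q_n(i_n,j_n)\big)_{n\in\mathbb{N}}\big]\subseteq Q_{i_0}$ for every admissible choice; in particular, taking the singleton $F=\{n\}$, one gets $q_n(i,j)\in Q_{i_0}$ whenever $1\le i,j\le n$. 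Evaluating the $\prec$-relations at $i_n=j_n=1$ shows that $\max(C_n^-\cup V_n^-)\to-\infty$ and $\min(C_n^+\cup V_n^+)\to+\infty$ as $n\to\infty$, so I may fix $n\in\mathbb{N}$ with $n\ge k+1$, $\max V_n^-<k_1$ and $k_2<\min V_n^+$.

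Then I would set
\[
\beta:=\sum_{t\in V_n^-}\frac{(-1)^{-t}}{(-t+1)!},\qquad \gamma:=\sum_{t\in V_n^+}(-1)^{t+1}t!,\qquad x_0:=\mathcal{T}^{q_n(1,1)}(x).
\]
Then $\beta\in(e^{-1}-1,e^{-1})$ and $\beta\neq0$ (its canonical representation has the non-empty domain $V_n^-$), whence $\beta\in\mathbb{Q}\setminus\mathbb{Z}$ with $dom^+(\beta)=\emptyset$ and $\max dom^-(\beta)=\max V_n^-<k_1$; likewise $\gamma$ is a non-zero integer, so $\gamma\in\mathbb{Z}^{\ast}$, with $dom^-(\gamma)=\emptyset$ and $\min dom^+(\gamma)=\min V_n^+>k_2$. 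For every $0\le p,q\le k$ one has, as rational numbers, $q_n(p+1,q+1)=q_n(1,1)+p\beta+q\gamma$; since $n\ge k+1$ the resulting digits $p+1\le -t$ on the positions $t\in V_n^-$ and $q+1\le t$ on the positions $t\in V_n^+$ all lie in range, so no carrying occurs, and, because the maps $T_t$ defining $\mathcal{T}$ commute, merging position by position the exponents on $V_n^{\pm}$ gives
\[
\mathcal{T}^{q_n(p+1,q+1)}=\mathcal{T}^{p\beta+q\gamma}\,\mathcal{T}^{q_n(1,1)}\qquad(0\le p,q\le k).
\]

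Finally, for $0\le p,q\le k$ the pair $(p+1,q+1)$ lies in $\mathbb{N}\times\mathbb{N}$ with $p+1,q+1\le k+1\le n$, so $q_n(p+1,q+1)\in Q_{i_0}$, that is, $\mathcal{T}^{q_n(p+1,q+1)}(x)\in B_{i_0}$; in particular $x_0=\mathcal{T}^{q_n(1,1)}(x)\in B_{i_0}$. Hence $\mathcal{T}^{p\beta+q\gamma}(x_0)=\mathcal{T}^{q_n(p+1,q+1)}(x)\in B_{i_0}$ for all $0\le p,q\le k$, and since $B_{i_0}$ has radius $\varepsilon/3$ this forces $\mathcal{T}^{p\beta+q\gamma}(x_0)\in B(x_0,\varepsilon)$, which is the assertion. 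I expect the only genuinely delicate step to be the identity $\mathcal{T}^{q_n(p+1,q+1)}=\mathcal{T}^{p\beta+q\gamma}\mathcal{T}^{q_n(1,1)}$: it rests on keeping all digits within the admissible ranges — which is exactly why $n\ge k+1$ must be imposed — and on the commutativity of the $T_t$, used to merge, at each position of $V_n^{\pm}$, the exponent coming from $\mathcal{T}^{q_n(1,1)}$ with the one coming from $\mathcal{T}^{p\beta+q\gamma}$; everything else is routine bookkeeping.
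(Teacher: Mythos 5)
Your proposal is correct and follows essentially the same route as the paper's proof: the same colouring of $\qat$ by $\varepsilon$-balls through the orbit of a fixed point, the same application of Theorem~\ref{thm:000}, and the same choice of $\beta,\gamma$ from the ``variable'' blocks $V_n^{\pm}$ with $x_0=\mathcal{T}^{q_n(1,1)}(x)$. Your explicit justification of the merging identity $\mathcal{T}^{q_n(p+1,q+1)}=\mathcal{T}^{p\beta+q\gamma}\mathcal{T}^{q_n(1,1)}$ (digits staying in range because $k+1\leq\min\{|t|:t\in V_n^-\cup V_n^+\}$, plus commutativity) is exactly the step the paper leaves implicit.
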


\begin{proof}
Let $k\in \nat$ and $\varepsilon>0.$ Since $X$ is compact, we have $X=\bigcup^m_{i=1}B(x_i,\frac{\varepsilon}{2})$ for some $x_1,\ldots,x_m\in X,\;m\in \nat.$ Let $x\in X.$ We form a partition of the set of rational numbers $\mathbb{Q}=Q_1\cup\ldots\cup Q_m,$ where \begin{center} $q\in Q_i\;\Leftrightarrow\;\mathcal{T}^q(x)\in B(x_i,\frac{\varepsilon}{2})$ and $\mathcal{T}^q(x)\notin B(x_j,\frac{\varepsilon}{2})$ for $j<i.$\end{center}

According to Theorem~\ref{thm:000}, there exist $1\leq i_{0}\leq m$ and $C^-,V^-,C^+,V^+$ non-empty sets with $C^-\cap V^-=\emptyset=C^+\cap V^+,$ such that $$q^\ast_{(p,q)}=\sum_{t\in C^-}q_t\frac{(-1)^{-t}}{(-t+1)!}\; + \;p\sum_{t\in V^-}\frac{(-1)^{-t}}{(-t+1)!}\; + \;\sum_{t\in C^+}q_t(-1)^{t+1}t!\;+\;q\sum_{t\in V^+}(-1)^{t+1}t!\in Q_{i_0}$$ for every $1\leq p,q\leq k+1\leq \min\{|t|:\;t\in V^-\cup V^+\},$ where $1\leq q_{t}\leq |t|,\; t\in \zat^\ast$ and $\max dom^-(q^\ast_{(1,1)})<k_1<k_2<\min dom^+(q^\ast_{(1,1)}).$ Equivalently, if \begin{center}$\beta=\sum_{t\in V^-}\frac{(-1)^{-t}}{(-t+1)!},\;\gamma=\sum_{t\in V^+}(-1)^{t+1}t!,\; \delta=\sum_{t\in C^-}q_t\frac{(-1)^{-t}}{(-t+1)!}$ and $\epsilon=\sum_{t\in C^+}q_t(-1)^{t+1}t!$\end{center} we have that $\mathcal{T}^{\delta\;+\;p\beta\;+\;\epsilon\;+\;q\gamma}(x)\in B(x_{i_0},\frac{\varepsilon}{2})$ for every $1\leq p,q\leq k+1.$

Let $x_0=\mathcal{T}^{\delta\;+\;\beta+\;\epsilon\;+\;\gamma}(x).$
Then $\mathcal{T}^{p\beta+q\gamma}(x_0)\in B(x_0,\varepsilon)$ for every $0\leq p,q\leq k.$
\end{proof}

According to the previous proof, Theorem~\ref{thm:ole} is an implication of Theorem~\ref{thm:000}. We will show that the inverse implication is partially correct. In fact we will point out that Theorem~\ref{thm:ole} implies a weaker form of Theorem~\ref{thm:000}, which actually can be considered as a van der Waerden-type theorem for the set of rational numbers. So, Theorem~\ref{thm:ole} can be considered as a topological van der Waerden theorem for the set of rational numbers.

\begin{thm}\label{thm:n12}
 Let $k_1<k_2$ be real numbers. If $\qat=Q_1\cup\ldots\cup Q_r,\;r\in \nat,$ then there exists $1\leq i_0\leq r$ such that the set $Q_{i_0}$ has the following property: for every $k\in \nat,$ there exist $\alpha\in \qat,\beta\in \qat\setminus\zat$ and $\gamma\in \zat^\ast$ with $dom^+(\beta)=\emptyset=dom^-(\gamma),$ $\max dom^-(\beta)<k_1<k_2<\min dom^+(\gamma)$ such that $\alpha+p\beta+q\gamma\subseteq Q_{i_0}$ for every $0\leq p,q\leq k.$
\end{thm}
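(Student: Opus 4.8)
The plan is to obtain Theorem~\ref{thm:n12} from Theorem~\ref{thm:ole} through a Furstenberg-style correspondence. Given the partition $\qat=Q_1\cup\cdots\cup Q_r$, I would encode it by the colouring $\chi\colon\qat\to\{1,\dots,r\}$ sending $q$ to the least $i$ with $q\in Q_i$. Form the compact metric space $\Omega=\{1,\dots,r\}^{\qat}$ with the product topology, and for $n\in\zat^\ast$ set $e_n=\frac{(-1)^{-n}}{(-n+1)!}$ if $n<0$ and $e_n=(-1)^{n+1}n!$ if $n>0$; define $T_n\colon\Omega\to\Omega$ by $(T_n\omega)(x)=\omega(x+e_n)$. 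These are commuting homeomorphisms, and because the Budak--I\c{s}ik--Pym expansion of any $q\in\qat$ satisfies $q=\sum_{t\in dom(q)}q_t e_t$, the induced rational indexed family is the translation action $\mathcal{T}^q=\sigma_q$, where $(\sigma_q\omega)(x)=\omega(x+q)$; in particular $(\mathcal{T}^q\omega)(0)=\omega(q)$. Taking $X=\overline{\{\mathcal{T}^q\chi:q\in\qat\}}\subseteq\Omega$, which is invariant under all $T_n^{\pm1}$, makes $(X,\mathcal{T})$ a simple rational dynamical system with $\chi\in X$.

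Next I would fix $k\in\nat$ and choose $\varepsilon>0$ small enough that any two points of $\Omega$ within distance $\varepsilon$ agree at the coordinate $0$. Applying Theorem~\ref{thm:ole} to $(X,\mathcal{T})$, to $k$, to the given $k_1<k_2$, and to $\varepsilon$ yields $\beta\in\qat\setminus\zat$ and $\gamma\in\zat^\ast$ with $dom^+(\beta)=\emptyset=dom^-(\gamma)$ and $\max dom^-(\beta)<k_1<k_2<\min dom^+(\gamma)$, together with $x_0\in X$ such that $\mathcal{T}^{p\beta+q\gamma}(x_0)\in B(x_0,\varepsilon)$ for all $0\le p,q\le k$. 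Since $\mathcal{T}^{p\beta+q\gamma}=\sigma_{p\beta+q\gamma}$, reading off the $0$-coordinate gives $x_0(p\beta+q\gamma)=x_0(0)$ for all such $p,q$.

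To pass back to $\chi$, note that $F=\{p\beta+q\gamma:0\le p,q\le k\}$ is a finite set containing $0$, and that $x_0$ lies in the orbit closure of $\chi$; hence there is $\alpha\in\qat$ with $\chi(\alpha+f)=(\mathcal{T}^\alpha\chi)(f)=x_0(f)$ for every $f\in F$. In particular $\chi(\alpha+p\beta+q\gamma)=x_0(p\beta+q\gamma)=x_0(0)=\chi(\alpha)$ for all $0\le p,q\le k$, so writing $i_0:=\chi(\alpha)$ we obtain $\alpha+p\beta+q\gamma\in Q_{i_0}$ for all $0\le p,q\le k$, with $\alpha,\beta,\gamma$ of the required shape. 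To make $i_0$ independent of $k$, as the statement demands, observe that any triple $(\alpha,\beta,\gamma)$ witnessing the property at level $k$ witnesses it at every level $k'\le k$ as well, since the conditions imposed on $\beta$ and $\gamma$ do not involve $k$. Thus for each colour $i$ the set of levels $k$ for which $Q_i$ has the stated property is downward closed in $\nat$; these $r$ sets cover $\nat$ by the preceding paragraph, so one of them is all of $\nat$, and that colour is the desired $i_0$.

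The engine of the argument is the identity $\mathcal{T}^q=\sigma_q$ for the shift system, which converts the topological recurrence of Theorem~\ref{thm:ole} at the single coordinate $0$ into the statement that one colour repeats along every configuration $\alpha+p\beta+q\gamma$; everything else is routine correspondence bookkeeping. The one step that genuinely requires care is the final promotion of $i_0$ from ``$k$-dependent'' to ``uniform in $k$'' — the usual compactness wrinkle in van der Waerden--type deductions.
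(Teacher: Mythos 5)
Your proposal is correct and follows essentially the same route as the paper: encode the partition as a point of $\{1,\dots,r\}^{\qat}$, act by the shifts $T_n\omega(\cdot)=\omega(\cdot+e_n)$, apply Theorem~\ref{thm:ole} to the orbit closure, read off the $0$-coordinate, and remove the $k$-dependence of the colour by pigeonhole. Your explicit identification $\mathcal{T}^q=\sigma_q$ (valid for this shift system via the Budak--I\c{s}ik--Pym expansion) is exactly what the paper uses implicitly when it sets $\alpha=s_1+\cdots+s_m$, and your final downward-closure argument is a cleaner statement of the paper's ``infinitely many $k$'' remark.
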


\begin{proof}
It is sufficient to show that for every $k\in \nat$ some $Q_{j}$ satisfies the conclusion, for some $Q_{i_0}$ will do for infinite $k$ and that set will do for all $k.$

  Let $k\in \nat,\;\Omega=\{1,\ldots,r\}^{\qat}$ and enumerate $\qat=\bigcup_{n\in \nat}\{q_n\}$ with $q_1=0.$ $\Omega$ becomes compact metric space with metric \begin{center} $d(\omega,\omega')=\inf\{\frac{1}{t}:\;\omega(q_i)=\omega'(q_i)$ for $1\leq i<t\}.$ \end{center}
Let $\mathcal{T}$ be a rational indexed family which is defined from $\{T_n\}_{n\in \zat^\ast},$ where \begin{center} $T_{n}\omega(q)=\omega(q+(-1)^{n+1}n!),\;n\in \nat$ and $T_{n}\omega(q)=\omega(q+\frac{(-1)^{-n}}{(-n+1)!}),\;n\in \zat^{-}.$
\end{center}
Define a specific point $\omega\in \Omega$ according to the rule $\omega(q)=i\;\Leftrightarrow\;q\in Q_i$ and $q\notin Q_j$ for $j<i$ and let $X=\overline{\{\mathcal{T}^{s_1}\ldots \mathcal{T}^{s_m}\omega:\;s_i\in \qat,\;m\in \nat,\;1\leq i\leq m\}}.$ Then, $\mathcal{T}$ is a rational indexed family of $X.$ According to Theorem~\ref{thm:ole} (for $\varepsilon=1$) there exist $\beta\in \qat\setminus\zat,\;\gamma\in \zat^\ast$ with $dom^+(\beta)=\emptyset=dom^-(\gamma),$ $\max dom^-(\beta)<k_1<k_2<\min dom^+(\gamma)$ and $x_0\in X$ such that $d(\mathcal{T}^{p\beta+q\gamma}(x_0),x_0)<1$ for every $0\leq p,q\leq k.$
Then, we have \begin{center}$(\ast)\;\;\;\;\;\;\;\;x_0(0)=x_0(p\beta+q\gamma)$ for every $0\leq p,q\leq k.$ \end{center}
 $x_0\in X,$ so, there exist $s_1,\ldots, s_m\in \qat$ such that $x_0$ is close to $T^{s_1}\ldots T^{s_m}\omega.$ Set $\alpha=s_1+\ldots+ s_m\in \qat.$ According to $(\ast)$ we have that $\omega(\alpha)=\omega(\alpha+p\beta+q\gamma)$ for every $0\leq p,q\leq k,$ thus, we have $\alpha+p\beta+q\gamma\in Q_{\omega(\alpha)}$ for every $0\leq p,q\leq k.$
\end{proof}

\begin{remark}
We note that Theorem~\ref{thm:n12} is not strong enough to prove Theorem~\ref{thm:ole}, mainly since we cannot have control to the support of $\alpha\in \qat$ relating to the support of $\beta\in \qat\setminus\zat,$ $\gamma\in \zat^\ast$ and since in general $\mathcal{T}^{p+q}\neq \mathcal{T}^p \mathcal{T}^q.$
\end{remark}

\section{Rational Dynamical Systems and Uniform Rational Recurrence}

 In the present paragraph we introduce the notion of a rational dynamical system (Definition~\ref{d.syst} below) and also the notion of a uniform rational recurrent point of a such system (Definition~\ref{defn:n4}). Defining the minimal rational dynamical systems (Definition~\ref{m}) we prove that every rational dynamical system has uniform rational recurrent points. In fact a rational dynamical system is minimal if and only if every point of the system is uniform rational recurrent (Theorems~\ref{thm:n6} and ~\ref{thm:n9}).

In order to define the rational dynamical systems we need the definition of the commuting rational indexed families which we give below.

\begin{defn}\label{h.syst2}
 Let $\mathcal{T}_1=(\mathcal{T}_1^q)_{q\in \qat},\ldots,\mathcal{T}_l=(\mathcal{T}_l^q)_{q\in \qat}$ be $l$ rational indexed families of $X$ defined from the commuting families of maps $\{T_{1,n}\}_{n\in \zat^\ast},\ldots,\{T_{l,n}\}_{n\in \zat^\ast}$ of $X$ respectively. We say that the families $\mathcal{T}_1,\ldots,\mathcal{T}_l$ are \textit{commuting} if $T_{i,n_1}T_{j,n_2}=T_{j,n_2}T_{i,n_1}$ for every $1\leq i,j\leq l,\;n_1,n_2\in \zat^\ast.$
\end{defn}

\begin{defn}\label{d.syst} Let $X$ be a compact metric space and $\{T_{1,n}\}_{n\in \zat^\ast},\ldots,\{T_{l,n}\}_{n\in \zat^\ast}$ be $l$ families of commuting homeomorphisms from $X$ to itself. If the indexed families $\mathcal{T}_1=(\mathcal{T}_1^q)_{q\in \qat},\ldots,\mathcal{T}_l=(\mathcal{T}_l^q)_{q\in \qat},$ which are defined from $\{T_{1,n}\}_{n\in \zat^\ast},\ldots,\{T_{l,n}\}_{n\in \zat^\ast}$ respectively are commuting, then we say that $(X,\mathcal{T}_1,\ldots,\mathcal{T}_l)$ is a \textit{rational dynamical system}.
\end{defn}

\begin{remark}\label{rem:0002} For a rational indexed family $\mathcal{T}=(\mathcal{T}^q)_{q\in \qat}$ of a rational dynamical system $(X,\mathcal{T})$ we have in general that $\mathcal{T}^{-q}\neq (\mathcal{T}^{-1})^{q},$ where we have set $(\mathcal{T}^{-1})^{q}=(\mathcal{T}^q)^{-1}.$
\end{remark}

We will define and characterize the minimal rational dynamical systems.

\begin{defn}\label{m} A rational dynamical system $(X,\mathcal{T}_1,\ldots,\mathcal{T}_l),$ where $\mathcal{T}_i=(\mathcal{T}_i^q)_{q\in \qat}$ is defined from the commuting homeomorphisms $\{T_{i,n}\}_{n\in \zat^\ast}$ of $X$ for $1\leq i\leq l,$ is said to be \textit{minimal} if for every closed $Y\subseteq X$ with $T_{i,n}(Y)= Y$ for every $1\leq i\leq l,\; n\in \zat^\ast$ we have that $Y=X$ or $Y=\emptyset.$
\end{defn}

\begin{prop}\label{charmin}
Let $(X,\mathcal{T}_1,\ldots,\mathcal{T}_l)$ be a rational dynamical system, where for ${1\leq i\leq l,}$ $\mathcal{T}_i=(\mathcal{T}_i^q)_{q\in \qat}$ is defined from the commuting homeomorphisms $\{T_{i,n}\}_{n\in \zat^\ast}.$ Let $G$ be the group of homeomorphisms of $X$ generated by the functions $T_{i,n},$ for $n\in \zat^\ast$ and $1\leq i\leq l.$ The following are equivalent:
\begin{itemize}
\item[(i)] $(X,\mathcal{T}_1,\ldots,\mathcal{T}_l)$ is minimal.
\item[(ii)] $\overline{\{S(x):\;S\in G\}}=X$ for every $x\in X.$
\item[(iii)] For every non-empty open set $V\subseteq X$ there exist a non-empty finite subset of $G,$ $F,$ such that $\bigcup_{S\in F}S^{-1}(V)=X.$
\end{itemize}
\end{prop}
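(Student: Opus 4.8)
The plan is to prove the three equivalences $(i)\Leftrightarrow(ii)\Leftrightarrow(iii)$ by adapting the standard equivalences for minimal topological dynamical systems to the group-action setting, noting that the rational indexed families $\mathcal{T}_i$ play no direct role here: all the information is carried by the group $G$ generated by the homeomorphisms $T_{i,n}$, since a closed set $Y$ satisfies $T_{i,n}(Y)=Y$ for all $i,n$ if and only if $S(Y)=Y$ for all $S\in G$ (as the $T_{i,n}$ are homeomorphisms, $T_{i,n}(Y)=Y$ forces $T_{i,n}^{-1}(Y)=Y$ as well, and the collection of $S$ with $S(Y)=Y$ is a subgroup). So the proposition is really just the classical characterization of minimality for the action of the (countable, but that is irrelevant) group $G$ on the compact metric space $X$.

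For $(i)\Rightarrow(ii)$: fix $x\in X$ and set $Y=\overline{\{S(x):S\in G\}}$. This is a nonempty closed set; I would check it is $G$-invariant, i.e. $T_{i,n}(Y)=Y$ for every $i,n$. Since each $T_{i,n}$ is a homeomorphism it maps the orbit $\{S(x):S\in G\}$ onto $\{T_{i,n}S(x):S\in G\}=\{S(x):S\in G\}$ (using that $S\mapsto T_{i,n}S$ is a bijection of $G$), hence maps its closure onto its closure, so $T_{i,n}(Y)=Y$. By minimality $Y=X$ (it is nonempty), which is exactly $(ii)$.

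For $(ii)\Rightarrow(iii)$: let $V\subseteq X$ be nonempty open. By $(ii)$, for every $x\in X$ the orbit $\{S(x):S\in G\}$ is dense, so it meets $V$; hence there is $S\in G$ with $S(x)\in V$, i.e. $x\in S^{-1}(V)$. Thus $\{S^{-1}(V):S\in G\}$ is an open cover of $X$, and compactness extracts a finite subfamily $F\subseteq G$ with $\bigcup_{S\in F}S^{-1}(V)=X$. For $(iii)\Rightarrow(i)$: let $Y\subseteq X$ be closed with $T_{i,n}(Y)=Y$ for all $i,n$, hence $S(Y)=Y$ for all $S\in G$; suppose $Y\neq X$. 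Then $V=X\setminus Y$ is nonempty open, so by $(iii)$ there is a finite $F\subseteq G$ with $X=\bigcup_{S\in F}S^{-1}(V)$. But for each $S\in F$, $S(Y)=Y$ gives $S^{-1}(V)=S^{-1}(X\setminus Y)=X\setminus S^{-1}(Y)=X\setminus Y=V$ (since $S^{-1}(Y)=Y$ too, as $G$ is a group), so $X=V$, forcing $Y=\emptyset$. Hence $Y=X$ or $Y=\emptyset$, which is $(i)$.

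I do not expect any serious obstacle; the one point requiring a little care is the bookkeeping that $T_{i,n}(Y)=Y$ for all $i,n$ really is equivalent to $S(Y)=Y$ for all $S\in G$ — the forward direction needs that $T_{i,n}(Y)=Y$ implies $T_{i,n}^{-1}(Y)=Y$, which holds because $T_{i,n}$ is a bijection of $X$, and then the set $\{S\in G:S(Y)=Y\}$ is closed under composition and inverses, hence equals $G$ once it contains all generators. Everything else is the routine "dense orbit $\Leftrightarrow$ no proper closed invariant set $\Leftrightarrow$ finite subcover" argument, with compactness of $X$ used exactly once, in $(ii)\Rightarrow(iii)$.
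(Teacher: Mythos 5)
Your proof is correct and follows essentially the same route as the paper's: the same orbit-closure argument for $(i)\Rightarrow(ii)$, the same compactness extraction of a finite subcover, and the same complement-of-an-invariant-set argument for recovering minimality. The only differences are cosmetic --- you arrange the implications as a single cycle $(i)\Rightarrow(ii)\Rightarrow(iii)\Rightarrow(i)$ where the paper proves $(i)\Leftrightarrow(ii)$ and $(i)\Leftrightarrow(iii)$ separately, and you spell out the (correct, and tacitly used by the paper) point that invariance under the generators $T_{i,n}$ implies invariance under all of $G$.
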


\begin{proof}
$(i)\Rightarrow (ii)$ Let $x\in X.$ For every $n\in \zat^\ast,\;1\leq i\leq l,$ we have that

 $T_{i,n}(\overline{\{S(x):\;S\in G\}})=\overline{\{S(x):\;S\in G\}}.$ Since $\overline{\{S(x):\;S\in G\}}\neq \emptyset$ and $(X,\mathcal{T}_1,\ldots,\mathcal{T}_l)$ is a minimal dynamical system, we have that $\overline{\{S(x):\;S\in G\}}=X.$

 $(ii)\Rightarrow (i)$ If $Y$ is a closed non-empty closed subset of $X$ with $T_{i,n}(Y)=Y$ for every $n\in \zat^\ast,\;1\leq i\leq l,$ then $X=\overline{\{S(y):\;S\in G\}}\subseteq Y$ for every $y\in Y.$ Then $Y=X,$ thus, $(X,\mathcal{T}_1,\ldots,\mathcal{T}_l)$ is minimal.

 $(i)\Rightarrow (iii)$ For every non-empty open set $V$ we have $\bigcup_{S\in G}S^{-1}(V)=X.$ From the compactness of $X$ we have the conclusion.

 $(iii)\Rightarrow (i)$ Let $(X,\mathcal{T}_1,\ldots,\mathcal{T}_l)$ is not minimal. Let $Y$ be a non-empty closed invariant proper subset of $X$ and $V=X\setminus Y.$ Then $\bigcup_{S\in G}S^{-1}(V)\neq X,$ a contradiction.
\end{proof}

\begin{defn} Let $(X,\mathcal{T}_1,\ldots,\mathcal{T}_s)$ be a rational dynamical system and $Y\subseteq X.$ We say that the system $(Y,\mathcal{T}_1|_Y,\ldots,\mathcal{T}_s|_Y)$ is a \textit{subsystem} of $(X,\mathcal{T}_1,\ldots,\mathcal{T}_s)$ if

$(i)\;Y$ is a closed subset of $X,$ and

$(ii)\;T_{i,n}(Y)= Y$ for every $1\leq i\leq s,\;n\in \zat^\ast.$
\end{defn}

\begin{prop}\label{mds}
Every rational dynamical system has a minimal subsystem.
\end{prop}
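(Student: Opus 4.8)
The plan is to run the standard Zorn's-lemma argument for existence of a minimal subsystem, exactly as in the classical topological-dynamics setting, taking care only that the invariance condition is the one appropriate to rational dynamical systems (namely invariance under each of the homeomorphisms $T_{i,n}$ for $1\le i\le l$, $n\in\zat^\ast$). First I would consider the collection $\mathcal{S}$ of all non-empty closed subsets $Y\subseteq X$ satisfying $T_{i,n}(Y)=Y$ for every $1\le i\le l$ and every $n\in\zat^\ast$; equivalently, $\mathcal{S}$ is the collection of underlying sets of subsystems of $(X,\mathcal{T}_1,\ldots,\mathcal{T}_l)$. This collection is non-empty since $X\in\mathcal{S}$, and I would partially order it by reverse inclusion.

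Next I would verify the chain condition. Let $\{Y_\alpha\}_{\alpha\in A}$ be a chain in $\mathcal{S}$ (totally ordered by inclusion) and set $Y=\bigcap_{\alpha\in A}Y_\alpha$. Since $X$ is compact and each $Y_\alpha$ is closed, and the $Y_\alpha$ have the finite intersection property (being a chain of non-empty sets), $Y$ is a non-empty closed set. For invariance: since each $T_{i,n}$ is a homeomorphism and $T_{i,n}(Y_\alpha)=Y_\alpha$ for all $\alpha$, one checks $T_{i,n}(Y)=T_{i,n}\big(\bigcap_\alpha Y_\alpha\big)=\bigcap_\alpha T_{i,n}(Y_\alpha)=\bigcap_\alpha Y_\alpha=Y$, where the commuting of $T_{i,n}$ with intersection uses that $T_{i,n}$ is a bijection (so it preserves arbitrary intersections). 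Hence $Y\in\mathcal{S}$ and $Y$ is an upper bound for the chain in the reverse-inclusion order. By Zorn's lemma $\mathcal{S}$ has a maximal element $Y_0$ with respect to reverse inclusion, i.e. a minimal (under inclusion) non-empty closed invariant set.

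Finally I would observe that the restricted system $(Y_0,\mathcal{T}_1|_{Y_0},\ldots,\mathcal{T}_l|_{Y_0})$ is genuinely a rational dynamical system: the maps $T_{i,n}|_{Y_0}:Y_0\to Y_0$ are homeomorphisms (being restrictions of homeomorphisms to an invariant set), they still commute, and the induced rational indexed families $\mathcal{T}_i|_{Y_0}$ are still pairwise commuting since this only depends on the relations $T_{i,n_1}T_{j,n_2}=T_{j,n_2}T_{i,n_1}$, which persist under restriction. By minimality of $Y_0$ in $\mathcal{S}$, any non-empty closed subset $Z\subseteq Y_0$ with $T_{i,n}(Z)=Z$ for all $i,n$ must equal $Y_0$, so by Definition~\ref{m} the system $(Y_0,\mathcal{T}_1|_{Y_0},\ldots,\mathcal{T}_l|_{Y_0})$ is minimal, and it is a subsystem of $(X,\mathcal{T}_1,\ldots,\mathcal{T}_l)$ as required. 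There is no serious obstacle here; the only point meriting a moment's care is that invariance is stated as equality $T_{i,n}(Y)=Y$ rather than mere forward-invariance $T_{i,n}(Y)\subseteq Y$, which is why one wants the $T_{i,n}$ to be homeomorphisms and uses bijectivity both in the chain step and in passing the property to the restriction.
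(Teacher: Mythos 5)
Your proposal is correct and follows essentially the same route as the paper: apply Zorn's lemma to the family of non-empty closed sets invariant under all the $T_{i,n}$, using compactness and the finite intersection property to show a chain's intersection is non-empty. You are in fact slightly more careful than the paper's own proof, which omits the verification that the intersection of a chain remains invariant (the step where you use bijectivity of the $T_{i,n}$).
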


\begin{proof}
Let $(X,\mathcal{T}_1,\ldots,\mathcal{T}_l)$ be a rational dynamical system, where for $1\leq i\leq l,$ $\mathcal{T}_i=(\mathcal{T}^q_i)_{q\in \qat}$ is defined from the commuting homeomorphisms $\{T_{i,n}\}_{n\in \zat^\ast}.$ Let $\L=\{Y\subseteq X:\;Y\neq \emptyset,\;Y$ closed and $T_{i,n}(Y)=Y$ for every $n\in \zat^\ast,\;1\leq i\leq l\}.$ $\L\neq \emptyset$ since $X\in \L.$ Let $\D\subseteq \L$ be a family totaly ordered by inclusion. $\D$ has the finite intersection property and since $X$ is compact we have that $A:=\bigcap_{Y\in \D}Y\neq\emptyset,$ with $A\subseteq Y$ for every $Y\in \D.$ According to Zorn's lemma there exists a minimal $Y_0\in \L.$ Then, $(Y_0,\mathcal{T}_1|_{Y_0},\ldots,\mathcal{T}_l|_{Y_0})$ is a minimal subsystem.
\end{proof}

We will introduce the notion of uniformly rational recurrent points for a rational dynamical system. Firstly we will remind the notion of a syndetic subset of an abelian (semi-)group.

\begin{defn}\label{defn:n3}
 A subset $E$ of an abelian (semi-)group $G$ is \textit{syndetic} if there exists $F\in [G]^{<\omega}_{>0}$ such that $G=\bigcup_{g\in F}\{s\in G:\;g+s\in E\}.$
\end{defn}

\begin{defn}\label{defn:n4}
 Let $(X,\mathcal{T}_1,\ldots,\mathcal{T}_l)$ be a rational dynamical system, where for $1\leq i\leq l,$ $\mathcal{T}_i$ is defined from the commuting homeomorphisms $\{T_{i,n}\}_{n\in \zat^\ast}.$ A point $x\in X$ is \textit{uniformly rational recurrent} for $(X,\mathcal{T}_1,\ldots,\mathcal{T}_l)$ if for any neighborhood $V$ of $x,$ the set $\{S\in G:\;S(x)\in V\}$ is syndetic, where $G$ is the group of homeomorphisms of $X$ generated by the functions $T_{i,n}$ for every $n\in \zat^\ast$ and $1\leq i\leq l.$
\end{defn}

The following theorem gives the connection between minimal rational dynamical systems and uniformly rational recurrent points.

\begin{thm}\label{thm:n6}
 If $(X,\mathcal{T}_1,\ldots,\mathcal{T}_l)$ is a minimal rational dynamical system, then every point $x\in X$ is uniformly rational recurrent.
\end{thm}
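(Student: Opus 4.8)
The plan is to mimic the classical argument that minimal topological dynamical systems consist entirely of uniformly recurrent (almost periodic) points, adapted to the group $G$ of homeomorphisms generated by the $T_{i,n}$. First I would fix $x \in X$ and an open neighborhood $V$ of $x$; I must produce a finite set $F \subseteq G$ with $\bigcup_{S \in F} \{U \in G : S \circ U \in \{W : W(x) \in V\}\} = G$, i.e.\ so that for every $U \in G$ there is $S \in F$ with $(S \circ U)(x) \in V$. By Proposition~\ref{charmin}, minimality gives, for the non-empty open set $V$, a finite set $F \subseteq G$ with $\bigcup_{S \in F} S^{-1}(V) = X$. Since $G$ consists of homeomorphisms of $X$ (this is exactly why Definition~\ref{d.syst} uses homeomorphisms, not merely continuous maps), for any $U \in G$ the point $U(x)$ lies in $X = \bigcup_{S \in F} S^{-1}(V)$, so $U(x) \in S^{-1}(V)$ for some $S \in F$, which means $(S \circ U)(x) \in V$, and $S \circ U \in G$. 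That is precisely the syndeticity condition for the set $\{W \in G : W(x) \in V\}$ with respect to the abelian group $G$.

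The one point that needs care — and the step I expect to be the main obstacle — is verifying that $G$ is genuinely abelian, so that Definition~\ref{defn:n3} applies and the bookkeeping above is the correct notion of syndetic. Here I would invoke the commutativity hypotheses built into Definitions~\ref{h.syst2} and~\ref{d.syst}: each family $\{T_{i,n}\}_{n \in \zat^\ast}$ is a commuting family, and the families $\mathcal{T}_1,\dots,\mathcal{T}_l$ are commuting in the sense that $T_{i,n_1} T_{j,n_2} = T_{j,n_2} T_{i,n_1}$ for all $i,j,n_1,n_2$. Hence any two generators of $G$ commute, and therefore so do all elements of $G$; writing the group operation additively, $G$ is an abelian group, and Definition~\ref{defn:n3} is literally applicable. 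I should also note that $\{W \in G : W(x) \in V\}$ is non-empty (it contains the identity, since $x \in V$), so the syndetic set in question is non-empty, which is implicit in Definition~\ref{defn:n4}.

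Putting the pieces together: given $x$ and $V$, apply the implication $(i) \Rightarrow (iii)$ of Proposition~\ref{charmin} to get the finite set $F \subseteq G$ covering $X$ by the sets $S^{-1}(V)$; then observe that for every $U \in G$ we have $U(x) \in X$, pick $S \in F$ with $U(x) \in S^{-1}(V)$, and conclude $S \circ U \in \{W \in G : W(x) \in V\}$. Since $U$ was arbitrary, $G = \bigcup_{S \in F}\{U \in G : S \circ U \in E\}$ where $E = \{W \in G : W(x) \in V\}$, i.e.\ $E$ is syndetic in $G$. As $V$ was an arbitrary neighborhood of $x$, the point $x$ is uniformly rational recurrent, and since $x \in X$ was arbitrary, every point of $X$ has this property. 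The proof uses nothing about the word-representation of rationals or about Theorem~\ref{thm:000}; it is purely the order-theoretic/compactness content of minimality, exactly as in the classical case.
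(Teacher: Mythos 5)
Your proof is correct and follows essentially the same route as the paper: apply Proposition~\ref{charmin}\,(i)$\Rightarrow$(iii) to get a finite set $S_1,\ldots,S_m\in G$ with $\bigcup_i S_i^{-1}(V)=X$, and then observe that for every $S\in G$ some $S_iS$ returns $x$ to $V$, which is exactly syndeticity of $\{S\in G:\;S(x)\in V\}$. Your extra remarks on the abelianness of $G$ and the non-emptiness of the return set are accurate but are left implicit in the paper's argument.
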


\begin{proof}
If $\mathcal{T}_i,$ for $1\leq i\leq l,$ is defined from the commuting homeomorphisms $\{T_{i,n}\}_{n\in \zat^\ast}$ and $G$ is the group of homeomorphisms of $X$ generated by the functions $T_{i,n}$ for every $n\in \zat^\ast$ and $1\leq i\leq l,$ then for every $x\in X$ and every non-empty open set $V\subseteq X,$ the set $\{S\in G:\;S(x)\in V\}$ is syndetic. Indeed, according to Proposition~\ref{charmin} we have that $\bigcup_{i=1}^{m}S_i^{-1}(V)=X$ for some $m\in \nat,$ $S_1,\ldots,S_m\in G.$ So, for every $S\in G$ we have $S_i(S(x))\in V$ for some $1\leq i\leq m,$ or $S_iS\in \{S\in G:\;S(x)\in V\}.$
\end{proof}

\begin{cor}\label{thm:n7}
For any rational dynamical system $(X,\mathcal{T}_1,\ldots,\mathcal{T}_l),$ the set of uniformly rational recurrent points is non-empty.
\end{cor}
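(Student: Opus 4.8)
The plan is to combine Proposition~\ref{mds} with Theorem~\ref{thm:n6}, adding a short transfer argument that propagates uniform rational recurrence from a minimal subsystem up to the ambient system. First I would invoke Proposition~\ref{mds} to obtain a minimal subsystem $(Y_0,\mathcal{T}_1|_{Y_0},\ldots,\mathcal{T}_l|_{Y_0})$, where $Y_0$ is a non-empty closed subset of $X$ with $T_{i,n}(Y_0)=Y_0$ for all $1\le i\le l$ and $n\in\zat^\ast$, and $\mathcal{T}_i|_{Y_0}$ is the rational indexed family of $Y_0$ defined from the homeomorphisms $T_{i,n}|_{Y_0}$. By Theorem~\ref{thm:n6}, every point $y\in Y_0$ is uniformly rational recurrent for this minimal subsystem. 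Fixing any such $y$, it then remains to see that $y$ is uniformly rational recurrent for $(X,\mathcal{T}_1,\ldots,\mathcal{T}_l)$ itself, which gives a point as required and finishes the proof.

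For the transfer step, let $G$ be the group of homeomorphisms of $X$ generated by the maps $T_{i,n}$ and $G_0$ the group of homeomorphisms of $Y_0$ generated by the $T_{i,n}|_{Y_0}$. Since every generator maps $Y_0$ onto $Y_0$, every $S\in G$ satisfies $S(Y_0)=Y_0$, so restriction defines a group homomorphism $\rho\colon G\to G_0$, $S\mapsto S|_{Y_0}$, which is surjective because its image contains all generators of $G_0$; moreover $G$ (hence $G_0$) is abelian, as the families $\mathcal{T}_1,\ldots,\mathcal{T}_l$ are commuting. Given a neighbourhood $V$ of $y$ in $X$, the set $V\cap Y_0$ is a neighbourhood of $y$ in $Y_0$, so by the choice of $y$ the set $E_0=\{S_0\in G_0:\;S_0(y)\in V\cap Y_0\}$ is syndetic in $G_0$, say $G_0=\bigcup_{g\in F}\{s\in G_0:\;g+s\in E_0\}$ with $F\in[G_0]^{<\omega}_{>0}$. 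For $S\in G$ one has $S(y)\in V$ if and only if $\rho(S)(y)\in V\cap Y_0$, i.e. $\{S\in G:\;S(y)\in V\}=\rho^{-1}(E_0)$. Picking for each $g\in F$ some $\tilde g\in\rho^{-1}(g)$ and using surjectivity of $\rho$, for every $t\in G$ there is $g\in F$ with $g+\rho(t)\in E_0$, whence $\rho(\tilde g+t)\in E_0$, i.e. $\tilde g+t\in\rho^{-1}(E_0)$; thus $G=\bigcup_{g\in F}\{s\in G:\;\tilde g+s\in\rho^{-1}(E_0)\}$ and $\{S\in G:\;S(y)\in V\}$ is syndetic in $G$. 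Hence $y$ is uniformly rational recurrent for $(X,\mathcal{T}_1,\ldots,\mathcal{T}_l)$.

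The argument is essentially bookkeeping, so there is no real obstacle; the only places needing (minor) care are the verification that $\rho\colon G\to G_0$ is a well-defined surjective homomorphism — which rests on the invariance $T_{i,n}(Y_0)=Y_0$ built into the definition of a subsystem and on $G$ being abelian, so that the notion of syndeticity of Definition~\ref{defn:n3} applies — and the closing observation that the preimage of a syndetic set under such a homomorphism is again syndetic.
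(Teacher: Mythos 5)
Your proof takes essentially the same route as the paper, which simply declares the corollary ``immediate from Proposition~\ref{mds} and Theorem~\ref{thm:n6}.'' Your additional transfer step --- pulling syndeticity back from the restricted group $G_0$ to $G$ along the surjective restriction homomorphism $\rho$ --- is correct and makes explicit a detail the paper leaves implicit, namely that uniform rational recurrence for the minimal subsystem really does yield uniform rational recurrence for the ambient system in the sense of Definition~\ref{defn:n4}.
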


\begin{proof}
Immediate from Proposition~\ref{mds} and Theorem~\ref{thm:n6}.
\end{proof}

Now we can characterize the minimal subsystems of a rational dynamical system via the uniformly rational recurrent points of the system.

\begin{thm}\label{thm:n9}
Let $(X,\mathcal{T}_1,\ldots,\mathcal{T}_l)$ be a rational dynamical system, where for $1\leq i\leq l,$ $\mathcal{T}_i$ is defined from the commuting homeomorphisms $\{T_{i,n}\}_{n\in \zat^\ast}$ and $G$ be the group of homeomorphisms of $X$ generated by the functions $T_{i,n}$ for every $n\in \zat^\ast$ and $1\leq i\leq l.$ Then $(Y,\mathcal{T}_1|_Y,\ldots,\mathcal{T}_l|_Y)$ is a minimal subsystem of $(X,\mathcal{T}_1,\ldots,\mathcal{T}_l)$ if and only if $Y=\overline{\{S(x):\;S\in G\}}$ for $x$ a uniformly rational recurrent point of $(X,\mathcal{T}_1,\ldots,\mathcal{T}_l).$
\end{thm}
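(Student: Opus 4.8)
The plan is to prove both directions using the characterization of minimality already established in Proposition~\ref{charmin} together with the syndeticity statement proved inside Theorem~\ref{thm:n6}. Throughout write $\mathcal{O}(x)=\overline{\{S(x):S\in G\}}$ for the orbit closure of $x$ under the group $G$; note that since each $T_{i,n}$ is a homeomorphism of $X$, the set $\{S(x):S\in G\}$ is $G$-invariant, hence so is its closure, so $\mathcal{O}(x)$ always carries a natural subsystem structure $(\mathcal{O}(x),\mathcal{T}_1|_{\mathcal{O}(x)},\ldots,\mathcal{T}_l|_{\mathcal{O}(x)})$.

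For the ``if'' direction, suppose $x$ is a uniformly rational recurrent point and set $Y=\mathcal{O}(x)$. I would first observe that $Y$ is a closed, nonempty, $G$-invariant set, hence a subsystem. To see it is minimal, I would apply part (ii) of Proposition~\ref{charmin} to the system $(Y,\mathcal{T}_1|_Y,\ldots,\mathcal{T}_l|_Y)$: it suffices to show $\overline{\{S(y):S\in G\}}=Y$ for every $y\in Y$. Equivalently, every nonempty relatively open $V\subseteq Y$ meets every $G$-orbit in $Y$. Here is where uniform rational recurrence of $x$ enters: pick $V$ a nonempty open subset of $Y$; since $x$ has dense orbit in $Y$, some $S_0(x)\in V$, so $W:=S_0^{-1}(V)$ is a neighborhood of $x$ in $Y$, and by hypothesis $E=\{S\in G:S(x)\in W\}$ is syndetic, say $G=\bigcup_{g\in F}(g+E)$ for a finite $F\subseteq G$. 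Then for any $y\in Y$ and any $S\in G$, writing $Sz$ for points near $y$... more carefully: given $y\in Y$ and any neighborhood $U$ of $y$, density of the orbit of $x$ gives $S_1(x)\in U$ for some $S_1$; writing $S_1=g+S_2$ with $g\in F$, $S_2\in E$, we get $g^{-1}S_1(x)=S_2(x)\in W$, and I would chase this to conclude that a bounded (by $F$) number of group elements applied to any point of $Y$ land in $V$ — this is exactly the hypothesis (iii) of Proposition~\ref{charmin} for the subsystem $Y$, giving minimality. I expect this orbit-chasing, translating ``syndetic return set for $x$'' into ``(iii) holds for the subsystem $Y$'', to be the main obstacle, since one must be careful that the finite set $F$ works uniformly over all points of $Y$, not just over the single orbit of $x$; compactness of $Y$ and continuity of the finitely many maps in $F$ are what make this uniform.

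For the ``only if'' direction, suppose $(Y,\mathcal{T}_1|_Y,\ldots,\mathcal{T}_l|_Y)$ is a minimal subsystem. By Corollary~\ref{thm:n7} applied to the minimal system $Y$, or directly by Theorem~\ref{thm:n6}, every point $x\in Y$ is uniformly rational recurrent as a point of the subsystem $Y$; I would then check that uniform rational recurrence computed inside $Y$ implies it inside $X$, which is immediate because a neighborhood of $x$ in $X$ restricts to a neighborhood in $Y$ and the relevant group (generated by the $T_{i,n}$) is the same set of generators, so the return set only shrinks — no, it is literally the same return set since the dynamics on $Y$ is just the restriction. Finally, for such $x\in Y$, minimality of $Y$ together with part (ii) of Proposition~\ref{charmin} gives $\overline{\{S(x):S\in G|_Y\}}=Y$, i.e.\ $Y=\mathcal{O}(x)$, which is the desired representation. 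This direction is essentially bookkeeping once Theorem~\ref{thm:n6} and Proposition~\ref{charmin} are in hand; the only point to state carefully is that ``uniformly rational recurrent for the subsystem $Y$'' and ``uniformly rational recurrent for $X$ with witnessing point in $Y$'' coincide, which follows from $Y$ being $G$-invariant.
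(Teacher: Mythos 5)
Your proposal is correct and follows essentially the same route as the paper: both directions reduce, via Proposition~\ref{charmin} and Theorem~\ref{thm:n6}, to showing that the orbit closure $Y=\overline{\{S(x):S\in G\}}$ of a uniformly rational recurrent point $x$ is minimal, by using syndeticity of the return set to cover the orbit of $x$ by finitely many preimages $S_i^{-1}(V)$. The one step to tighten is the passage from covering the orbit to covering its closure $Y$ (you only get $Y\subseteq\bigcup_i S_i^{-1}(\overline{V})$, not condition (iii) verbatim), which you can fix either by shrinking $V$ first or by arguing by contradiction with a neighborhood $V$ of $x$ whose closure misses $\overline{\{S(y):S\in G\}}$ --- the latter being exactly what the paper does.
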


\begin{proof}
It suffices to prove that if $y\in \overline{\{S(x):\;S\in G\}}$ then $x$ belongs to $\overline{\{S(y):\;S\in G\}}.$ Assume otherwise and let $V$ be an open neighborhood of $x$ with $\overline{V}\cap \overline{\{S(y):\;S\in G\}}=\emptyset.$ $x$ is a uniformly rational recurrent point, so there exists a finite set $\{S_1,\ldots,S_m\},\;m\in\nat$ of elements of $G$ such that for every $S\in G$ to have $S_i(S(x)) \in V$ for some $1\leq i\leq m.$ So, $\{S(x):\;S\in G\}\subseteq \bigcup_{i=1}^m S_i^{-1}(V),$ thus $y\in \overline{\{S(x):\;S\in G\}}\subseteq \bigcup_{i=1}^m S_i^{-1}(\overline{V}).$ Then we have $\overline{V}\cap \overline{\{S(y):\;S\in G\}}\neq\emptyset,$ a contradiction.
\end{proof}

\section{The recurrence properties of rational dynamical systems}

In Theorems~\ref{thm:olem1} and ~\ref{thm:olem2} below, we prove that rational dynamical systems has significant recurrence properties, analogous to those of the classical dynamical systems. So, Theorem~\ref{thm:olem1} is an extension of Theorem~\ref{thm:B} of Furstenberg-Weiss to rational dynamical systems and Theorem~\ref{thm:olem2} is an  equivalent reformulation of Theorem~\ref{thm:olem1} (for the analogous results for systems indexed by $\nat$ or $\zat$ see [Fu], [FuW] and [M]).

As a consequence of Theorem~\ref{thm:olem1}, which is a multiple version of Theorem~\ref{thm:ole} in case the transformations $T_i$ are invertible, we get a Gallai-type combinatorial result for the rational numbers (Theorem~\ref{thm:olem3}), proving that for $l\in \nat$ and any finite partition of $\qat^l,$ one of the cells of the partition contains (generalized) affine images of every finite subset of $\qat^l.$ We also remark that syndetic subsets of $\qat^l$ have the same property and that Theorem~\ref{thm:olem3} has implications for functions on large chunks of $\qat^l.$

\begin{thm}\label{thm:olem1} Let $l\in \nat$ and $(X,\mathcal{T}_1,\ldots,\mathcal{T}_l)$ a rational dynamical system, $k\in \nat$ and $k_1<k_2$ be arbitrary real numbers. For every $\varepsilon>0$ there exists $\beta\in \mathbb{Q}\setminus \zat,\;\gamma \in \zat^\ast$ with $dom^+(\beta)=\emptyset=dom^-(\gamma),$ $\max dom^-(\beta)<k_1<k_2<\min dom^+(\gamma)$ and $x_0\in X$ such that \begin{center} $\mathcal{T}_i^{p\beta +q\gamma}(x_0)\in B(x_0,\varepsilon)$ for every $0\leq p,q\leq k,\;1\leq i\leq l.$\end{center}
\end{thm}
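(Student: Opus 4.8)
The plan is to imitate Furstenberg's multidimensional iteration argument ("Furstenberg's $\Delta$-trick" / the simultaneous van der Waerden proof), but running it inside the rational framework supplied by Theorem~\ref{thm:000}, exactly as the single-map case was handled in Theorem~\ref{thm:ole}. First I would fix $k\in\nat$ and $\varepsilon>0$ and cover the compact space by finitely many balls $B(x_1,\varepsilon/2),\dots,B(x_m,\varepsilon/2)$. The key difference from Theorem~\ref{thm:ole} is that I now need a colouring of $\qat$ that sees all $l$ transformations at once. To do this I would pick a base point $x\in X$ and define a colouring of $\qat$ by a fixed finite set $\{1,\dots,N\}$ whose colour of $q$ records, for each $i=1,\dots,l$, which ball $\mathcal T_i^{\,q}(x)$ lies in; concretely the colour is a tuple in $\{1,\dots,m\}^l$, and the commutativity of the families $\{T_{i,n}\}$ (Definition~\ref{h.syst2}) is what makes the later product manipulations legal.

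Applying Theorem~\ref{thm:000} to this finite partition $\qat=Q_1\cup\dots\cup Q_N$ gives a monochromatic cell $Q_{i_0}$ and, for every $n$, a function $q_n$ on $\{0,\dots,n\}^2$ of the stated special shape. As in the proof of Theorem~\ref{thm:ole} I would extract from the $n=k+1$ stage the numbers $\beta=\sum_{t\in V^-}\frac{(-1)^{-t}}{(-t+1)!}$, $\gamma=\sum_{t\in V^+}(-1)^{t+1}t!$, $\delta=\sum_{t\in C^-}q_t\frac{(-1)^{-t}}{(-t+1)!}$ and $\epsilon=\sum_{t\in C^+}q_t(-1)^{t+1}t!$, so that $\beta\in\qat\setminus\zat$, $\gamma\in\zat^\ast$, $dom^+(\beta)=\emptyset=dom^-(\gamma)$, $\max dom^-(\beta)<k_1<k_2<\min dom^+(\gamma)$, and the "block rationals" $\delta+p\beta+\epsilon+q\gamma$ all lie in $Q_{i_0}$ for $0\le p,q\le k+1$ (shifting indices by one so that $p=q=0$ is also included, as in Theorem~\ref{thm:ole}). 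By the definition of the colouring this says there are fixed centres $y_1,\dots,y_l$ (the common ball indices for colour $i_0$, one per transformation) with
\[
\mathcal T_i^{\,\delta+p\beta+\epsilon+q\gamma}(x)\in B\!\left(y_i,\tfrac{\varepsilon}{2}\right)\qquad(0\le p,q\le k+1,\ 1\le i\le l).
\]
Now set $x_0=\mathcal T_1^{\,\delta+\beta+\epsilon+\gamma}\cdots$—more carefully, I take $x_0$ to be the image of $x$ under the \emph{single} exponent $\delta+\beta+\epsilon+\gamma$ but applied through one of the families, and then use the ordering conditions $\gamma\prec\beta$-type relations together with Remark~\ref{rem:000} (for $p,q\ge1$, $\mathcal T_i^{p\beta+q\gamma}=\mathcal T_i^{q\gamma}\mathcal T_i^{p\beta}$ etc.) to rewrite $\mathcal T_i^{\,p\beta+q\gamma}(x_0)$ as $\mathcal T_i^{\,\delta+(p+1)\beta+\epsilon+(q+1)\gamma}(x)$, which lies in $B(y_i,\varepsilon/2)$; since $x_0$ itself ($p=q=0$ case) lies in $B(y_i,\varepsilon/2)$ as well, the triangle inequality gives $\mathcal T_i^{\,p\beta+q\gamma}(x_0)\in B(x_0,\varepsilon)$ for all $0\le p,q\le k$ and all $i$.

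The main obstacle is bookkeeping with the supports rather than anything deep: I must verify that all the exponents $\delta+p\beta+\epsilon+q\gamma$ occurring really do decompose as legitimate "$\prec$-ordered sums" so that Remark~\ref{rem:000} applies termwise (this is why Theorem~\ref{thm:000} is stated with the $\prec$-increasing condition and with $k+1\le\min\{|t|:t\in V^-\cup V^+\}$ — the latter guarantees $p\beta$ and $q\gamma$ stay within the allowed coefficient bounds $1\le q_t\le|t|$), and that the single point $x_0$ can be consistently reached through \emph{each} of the $l$ commuting families from the same $x$. The commutativity hypothesis in Definition~\ref{d.syst} is exactly what removes the only real friction here, reducing the $l$-map statement to $l$ simultaneous instances of the $l=1$ computation already carried out in Theorem~\ref{thm:ole}; everything else is the uniform choice of monochromatic cell provided by Theorem~\ref{thm:000}. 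I would therefore structure the write-up as: (i) set up the $\{1,\dots,m\}^l$-colouring; (ii) quote Theorem~\ref{thm:000}; (iii) extract $\beta,\gamma,\delta,\epsilon$ and the centres $y_i$; (iv) define $x_0$ and finish with Remark~\ref{rem:000} plus the triangle inequality.
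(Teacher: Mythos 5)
Your base case and your use of Theorem~\ref{thm:000} reproduce the paper's proof of Theorem~\ref{thm:ole}, but the passage from $l=1$ to general $l$ contains a genuine gap that cannot be repaired inside your framework. Colouring $q\in\qat$ by the tuple of ball indices of $(\mathcal{T}_1^{q}(x),\ldots,\mathcal{T}_l^{q}(x))$ and applying Theorem~\ref{thm:000} only yields centres $y_1,\ldots,y_l$ (in general distinct) with $\mathcal{T}_i^{\,\delta+p\beta+\epsilon+q\gamma}(x)\in B(y_i,\varepsilon/2)$. To conclude you must produce a \emph{single} point $x_0$ with $\mathcal{T}_i^{\,p\beta+q\gamma}(x_0)$ near $x_0$ for \emph{every} $i$. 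Your candidate $\mathcal{T}_i^{\,\delta+\beta+\epsilon+\gamma}(x)$ depends on $i$; fixing, say, $x_0=\mathcal{T}_1^{\,\delta+\beta+\epsilon+\gamma}(x)$, the point $\mathcal{T}_2^{\,p\beta+q\gamma}(x_0)=\mathcal{T}_2^{\,p\beta+q\gamma}\mathcal{T}_1^{\,\delta+\beta+\epsilon+\gamma}(x)$ is a mixed word in the two families about which your colouring says nothing: commutativity of the generators $T_{i,n}$ lets you reorder compositions, but it does not merge exponents across different families, and $\mathcal{T}_1^{w}(x)\neq\mathcal{T}_2^{w}(x)$ in general. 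This is precisely the obstruction that makes multiple recurrence strictly harder than the $l=1$ case: your colouring gives $l$ recurrence statements that hold at $l$ possibly different points, and collapsing them to one point is the entire content of the theorem. The alternative classical route---a Gallai-type colouring of the exponent lattice $\qat^l$---is also unavailable here: the paper has no $l$-dimensional analogue of Theorem~\ref{thm:000} with the required support control, and Remark~\ref{rem:002} records explicitly that attempts to deduce Theorem~\ref{thm:olem1} from the Gallai-type Theorem~\ref{thm:olem3} failed, in part because $\mathcal{T}^{p+q}\neq\mathcal{T}^{p}\mathcal{T}^{q}$ in general.

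What the paper actually does is prove Theorems~\ref{thm:olem1} and~\ref{thm:olem2} simultaneously by induction on $l$ in the Furstenberg--Weiss manner: the case $l=1$ is Theorem~\ref{thm:ole} (where your colouring argument does legitimately appear); Theorem~\ref{thm:olem1} for $l$ yields Theorem~\ref{thm:olem2} for $l$ by passing to a minimal system, covering $X$ by finitely many sets $S_j^{-1}V$ with $S_j\in G$ and using their uniform continuity; and Theorem~\ref{thm:olem2} for $l$ yields Theorem~\ref{thm:olem1} for $l+1$ via the auxiliary minimal system $(X,\mathcal{T}_1\mathcal{T}_{l+1}^{-1},\ldots,\mathcal{T}_l\mathcal{T}_{l+1}^{-1},\mathcal{T}_{l+1})$, a sequence of small open sets $U_0,U_1,\ldots$ each pulled back into all of its predecessors, and sequential compactness to find $i_0<j_0$ with $x_{i_0},x_{j_0}$ close. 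That induction (or an equivalent device) is the missing ingredient in your proposal; everything before it is sound but only re-proves the $l=1$ case.
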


\begin{thm}\label{thm:olem2} Let $l\in \nat,$ $(X,\mathcal{T}_1,\ldots,\mathcal{T}_l,R)$ be a minimal rational dynamical system, $k\in \nat$ and $k_1<k_2$ be arbitrary real numbers. Then for every non-empty open set $U$ there exist $\beta\in \mathbb{Q}\setminus \zat$ and $\gamma \in \zat^\ast$ with $dom^+(\beta)=\emptyset=dom^-(\gamma),$ $\max dom^-(\beta)<k_1<k_2<\min dom^+(\gamma)$ such that $$\bigcap_{0\leq p,q\leq k}(U\cap (\mathcal{T}_1^{p\beta+q\gamma})^{-1}U\cap\ldots\cap (\mathcal{T}_l^{p\beta+q\gamma})^{-1}U)\neq \emptyset.$$
\end{thm}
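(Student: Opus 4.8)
The plan is to deduce Theorem~\ref{thm:olem2} from Theorem~\ref{thm:olem1} by the standard "minimal systems return to open sets syndetically" argument, adapted to the rational-indexed setting. First I would apply Theorem~\ref{thm:olem1} to the rational dynamical system $(X,\mathcal{T}_1,\ldots,\mathcal{T}_l)$ (ignoring $R$, or including it as an extra transformation if that is what the statement intends) to obtain, for a prescribed $\varepsilon>0$, parameters $\beta\in\qat\setminus\zat$, $\gamma\in\zat^\ast$ with $dom^+(\beta)=\emptyset=dom^-(\gamma)$, $\max dom^-(\beta)<k_1<k_2<\min dom^+(\gamma)$, and a point $x_0\in X$ with $\mathcal{T}_i^{p\beta+q\gamma}(x_0)\in B(x_0,\varepsilon)$ for all $0\le p,q\le k$ and $1\le i\le l$. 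The point of minimality is that this recurrence can be transported from $x_0$ into the given open set $U$: since the system is minimal, by Proposition~\ref{charmin}(iii) applied to $V=B(x_0,\varepsilon)$ there is a finite set $F\subseteq G$ with $\bigcup_{S\in F}S^{-1}(B(x_0,\varepsilon))=X$; thus some $S\in G$ carries a chosen point of $U$ near $x_0$.

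Carrying this out: fix a point $u\in U$ and choose $\varepsilon>0$ small enough that $B(u',2\varepsilon)\subseteq U$ for all $u'$ in a suitable subset — more precisely, I would first cover $X$ by finitely many balls $B(z_t,\varepsilon)$ and, since $U$ is nonempty open, by minimality (Proposition~\ref{charmin}(iii)) get $S_1,\ldots,S_m\in G$ with $\bigcup_t S_t^{-1}(U)=X$; then shrink $\varepsilon$ so that each $S_t$ is uniformly continuous enough that it maps $B(\cdot,\varepsilon)$-balls into balls of controlled radius, and so that one of the $S_t^{-1}(U)$ contains a ball around $x_0$. Then $x_0\in S_{t_0}^{-1}(U)$ for some $t_0$, i.e. $S_{t_0}(x_0)\in U$, and for each $0\le p,q\le k$ and $1\le i\le l$ the point $\mathcal{T}_i^{p\beta+q\gamma}(x_0)$ lies in $B(x_0,\varepsilon)$, so $S_{t_0}\mathcal{T}_i^{p\beta+q\gamma}(x_0)$ lies in a small ball around $S_{t_0}(x_0)\subseteq U$ provided $\varepsilon$ was chosen against the modulus of continuity of $S_{t_0}$. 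The single point $y:=S_{t_0}(x_0)$ (or rather $y:=\mathcal{T}$-translate arranged so that $y\in U$ and $\mathcal{T}_i^{p\beta+q\gamma}(y)\in U$ simultaneously) then witnesses $y\in\bigcap_{0\le p,q\le k}\bigl(U\cap(\mathcal{T}_1^{p\beta+q\gamma})^{-1}U\cap\cdots\cap(\mathcal{T}_l^{p\beta+q\gamma})^{-1}U\bigr)$, giving the nonemptiness.

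The subtle point — and the one genuine obstacle — is that in a rational dynamical system the translates $\mathcal{T}_i^{p\beta+q\gamma}$ are \emph{not} the iterates of a single homeomorphism and, as Remark~\ref{rem:0002} warns, need not even be elements of $G$ with well-behaved inverses relative to each other; so one cannot blithely conjugate by $S_{t_0}\in G$ and expect $S_{t_0}\mathcal{T}_i^{p\beta+q\gamma}S_{t_0}^{-1}$ to again be of the form $\mathcal{T}_i^{p'\beta'+q'\gamma'}$. The correct workaround is to not conjugate at all: instead, apply Theorem~\ref{thm:olem1} directly and use only the inclusion $\mathcal{T}_i^{p\beta+q\gamma}(x_0)\in B(x_0,\varepsilon)$, then move \emph{the whole configuration} $\{x_0\}\cup\{\mathcal{T}_i^{p\beta+q\gamma}(x_0)\}$ by a single $S_{t_0}\in G$ into $U$; since $S_{t_0}$ is uniformly continuous on the compact $X$, choosing $\varepsilon$ below its modulus of continuity at scale $\delta$ where $B(\text{pt},\delta)\subseteq U$, all of $S_{t_0}(x_0),S_{t_0}\mathcal{T}_i^{p\beta+q\gamma}(x_0)$ land in $U$, and $S_{t_0}(x_0)$ is the desired common point. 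This keeps the parameters $\beta,\gamma$ exactly as produced by Theorem~\ref{thm:olem1}, so all the support conditions ($dom^+(\beta)=\emptyset$, $dom^-(\gamma)=\emptyset$, $\max dom^-(\beta)<k_1<k_2<\min dom^+(\gamma)$) are inherited verbatim, and the only real work is the $\varepsilon$–$\delta$ bookkeeping against finitely many uniformly continuous maps.
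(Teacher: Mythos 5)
Your proposal follows essentially the same route as the paper: the paper proves Theorems~\ref{thm:olem1} and~\ref{thm:olem2} by a joint induction, and its step ``Theorem~\ref{thm:olem1} for $l$ implies Theorem~\ref{thm:olem2} for $l$'' is exactly your argument --- take $V=B(u,\varepsilon/2)\subseteq U$, use minimality (Proposition~\ref{charmin}) to write $X=\bigcup_{i=1}^m S_i^{-1}V$, choose $\delta$ against the uniform continuity of the finitely many $S_i$, apply Theorem~\ref{thm:olem1} at scale $\delta$ to get $y$ with $d(y,\mathcal{T}_i^{p\beta+q\gamma}(y))<\delta$, and push everything into $U$ by the $S_j$ with $S_j(y)\in V$. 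One correction to your discussion, though: your claimed ``workaround'' of moving the whole configuration by $S_{t_0}$ does \emph{not} dispense with commutation. To conclude that $x:=S_{t_0}(y)$ lies in $(\mathcal{T}_i^{p\beta+q\gamma})^{-1}U$ you need $\mathcal{T}_i^{p\beta+q\gamma}(S_{t_0}(y))=S_{t_0}(\mathcal{T}_i^{p\beta+q\gamma}(y))$; knowing only that $S_{t_0}(\mathcal{T}_i^{p\beta+q\gamma}(y))\in U$ is not the statement to be proved. Fortunately this identity does hold and is unproblematic: $\mathcal{T}_i^{p\beta+q\gamma}$ is by definition a finite composition of the generators $T_{j,n}$, all of which commute with every element of $G$, so $S_{t_0}$ commutes with each $\mathcal{T}_i^{p\beta+q\gamma}$. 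The genuine pathology you cite (Remark~\ref{rem:0002}, i.e.\ $\mathcal{T}^{-q}\neq(\mathcal{T}^{-1})^q$, and $\mathcal{T}^{p+q}\neq\mathcal{T}^p\mathcal{T}^q$) concerns the failure of the index map $q\mapsto\mathcal{T}^q$ to be a homomorphism, not the commutativity of the underlying homeomorphisms; the paper invokes the commutation explicitly at this step. Finally, make sure the covering is by $S_i^{-1}V$ for a ball $V$ compactly inside $U$ (not by $S_i^{-1}U$ itself), since there is no uniform $\delta$ with $B(y,\delta)\subseteq U$ for every $y\in U$; with $V=B(u,\varepsilon/2)$ and $B(u,\varepsilon)\subseteq U$, any point within $\varepsilon/2$ of $V$ lies in $U$, which is what closes the estimate.
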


\begin{proof}[Proof of Theorems~\ref{thm:olem1} and ~\ref{thm:olem2}] Our method of proof is induction on $l$ and consists of three steps:

(1) We will show that Theorem~\ref{thm:olem1} holds for $l=1,$

(2) if Theorem~\ref{thm:olem1} holds for some $l\in \nat$ then Theorem~\ref{thm:olem2} also holds for $l,$ and

(3) if Theorem~\ref{thm:olem2} holds for some $l\in \nat$ then Theorem~\ref{thm:olem1} holds for $l+1.$

For $l=1$ we have the conclusion of Theorem~\ref{thm:olem1} from Theorem~\ref{thm:ole}.

Let $l\in \nat$ such that Theorem~\ref{thm:olem1} holds. Let $(X,\mathcal{T}_1,\ldots,\mathcal{T}_l,R)$ a minimal rational dynamical system, where $\mathcal{T}_i$ is defined from the commuting homeomorphisms $\{T_{i,n}\}_{n\in \zat^\ast}$ of $X$ for $1\leq i\leq l,$ $R$ is defined from the commuting homeomorphisms $\{R_{n}\}_{n\in \zat^\ast}$ of $X,k\in \nat$ and $k_1<k_2$ be arbitrary real numbers. Let $U\subseteq X$ a non-empty open set. There exists $u\in U$ and $\varepsilon>0$ such that $B(u,\varepsilon)\subseteq U.$  Let $V=B(u,\frac{\varepsilon}{2})\subseteq U.$ Then, for every $x\in X$ with $d(x,V):=\inf\{d(x,y):\;y\in V\}<\frac{\varepsilon}{2}$ we have that $x\in U.$ Let $G$ be the group of homeomorphisms generated by $\{T_{i,n}\}_{n\in \zat^\ast},\;1\leq i\leq l$ and $\{R_{n}\}_{n\in \zat^\ast}.$

Since the system $(X,\mathcal{T}_1,\ldots,\mathcal{T}_l,R)$ is minimal there exists some $m\in \nat,$ $S_1,\ldots,S_m\in G$ with $X=\bigcup^{m}_{i=1}S_i^{-1}V$ $(\ast).$
Since $X$ is compact, every $S_i,\;1\leq i\leq m$ is uniformly continuous, so there exists $\delta>0$ such that if $y,z\in X$ with $d(y,z)<\delta$ then $d(S_i(y),S_i(z))<\frac{\varepsilon}{2}$ for $1\leq i\leq m.$ According to Theorem~\ref{thm:olem1} there exist $y\in X,\;\beta\in \mathbb{Q}\setminus \zat$ and $\gamma\in \zat^\ast$ with $dom^+(\beta)=\emptyset=dom^-(\gamma),$ $\max dom^-(\beta)<k_1<k_2<\min dom^+(\gamma)$ such that $d(y,\mathcal{T}_i^{p\beta+q\gamma}(y))<\delta$ for every $0\leq p,q\leq k,\;1\leq i\leq l.$ From $(\ast)$ we have that there exists $1\leq j\leq m$ such that $y\in S_j^{-1}V.$ Set $x=S_j(y)\in V.$ Since $S_j$ commutes with the $\{T_{i,n}\}_{n\in \zat^\ast},$ for $1\leq i\leq l,$ we have that $d(x,\mathcal{T}_i^{p\beta+q\gamma}(x))<\frac{\varepsilon}{2}$ for every $0\leq p,q\leq k,\;1\leq i\leq l.$ Then we have that $\{x,\mathcal{T}_1^{p\beta+q\gamma}(x),\ldots,\mathcal{T}_l^{p\beta+q\gamma}(x)\}\subseteq U$ for every $0\leq p,q\leq k,$ so $x\in \bigcap_{0\leq p,q\leq k}(U\cap (\mathcal{T}_1^{p\beta+q\gamma})^{-1}U\cap\ldots\cap (\mathcal{T}_l^{p\beta+q\gamma})^{-1}U)\neq \emptyset,$ the conclusion.

Let that Theorem~\ref{thm:olem2} holds for some $l\in \nat.$ We will show that Theorem~\ref{thm:olem1} holds for $l+1.$ Let $(X,\mathcal{T}_1,\ldots,\mathcal{T}_{l+1})$ a rational dynamical system, where $\mathcal{T}_i$ is defined from the commuting homeomorphisms $\{T_{i,n}\}_{n\in \zat^\ast}$ of $X$ for $1\leq i\leq l+1,k\in \nat$  and $k_1<k_2$ be arbitrary real numbers. Without loss of generality we can suppose that $(X,\mathcal{T}_1,\ldots,\mathcal{T}_{l+1})$ is minimal (or else we replace $(X,\mathcal{T}_1,\ldots,\mathcal{T}_{l+1})$ with a minimal subsystem). Let $U_0$ a non-empty open set with $diam(U_0):=\sup\{d(x,y):\;x,y\in U_0\}<\frac{\varepsilon}{2}.$ According to Theorem~\ref{thm:olem2} (for the minimal system $(X,\mathcal{T}_1 \mathcal{T}^{-1}_{l+1},\ldots,\mathcal{T}_l \mathcal{T}^{-1}_{l+1},\mathcal{T}_{l+1})$) there exist $\beta_1\in \mathbb{Q}\setminus \zat,$ $\gamma_1\in \zat^\ast$ with $dom^+(\beta_1)=\emptyset=dom^-(\gamma_1),$ $\max dom^-(\beta_1)<k_1<k_2<\min dom^+(\gamma_1)$ such that $$B_0:= \bigcap_{0\leq p,q\leq k}(U_0\cap\bigcap_{s=1}^l [\mathcal{T}_s^{p\beta_1+q\gamma_1}(\mathcal{T}_{l+1}^{p\beta_1+q\gamma_1})^{-1}]^{-1}U_0)\neq \emptyset.$$ Let $U_1$ a non-empty open set with $diam(U_1)<\frac{\varepsilon}{2}$ such that \\ $U_1\subseteq \bigcap_{0\leq p,q\leq k}(\mathcal{T}_{l+1}^{p\beta_1+q\gamma_1})^{-1}B_0=\bigcap_{0\leq p,q\leq k}\bigcap_{s=1}^{l+1}(\mathcal{T}_{s}^{p\beta_1+q\gamma_1})^{-1}U_0.$

\noindent
Suppose that for $m\in \nat$ we have chosen $U_1,\ldots,U_m$ non-empty, open sets with ${diam(U_i)<\frac{\varepsilon}{2}}$ for every $1\leq i\leq m,$ such that $$(\ast\ast)\;\; U_j\subseteq \bigcap_{0\leq p,q\leq k}\bigcap_{s=1}^{l+1}(\mathcal{T}_s^{p(\beta_j+\ldots+\beta_{i+1})+q(\gamma_j+\ldots+\gamma_{i+1})})^{-1}U_i$$ for every $0\leq i<j\leq m,$  with $\beta_{j-1}+\gamma_{j-1}\prec\beta_j+\gamma_j$ for $2\leq j\leq m,$ $dom^+(\beta_j)=\emptyset=dom^-(\gamma_j)$ for $1\leq j\leq m.$ From Theorem~\ref{thm:olem2} there exist $\beta_{m+1}\in \mathbb{Q}\setminus \zat$ and $\gamma_{m+1}\in \zat^\ast$ with $\beta_{m}+\gamma_{m}\prec\beta_{m+1}+\gamma_{m+1},$ $dom^+(\beta_{m+1})=\emptyset=dom^-(\gamma_{m+1})$ such that $$B_m:=\bigcap_{0\leq p,q\leq k}(U_m\cap \bigcap_{s=1}^l [\mathcal{T}_s^{p\beta_{m+1}+q\gamma_{m+1}}(\mathcal{T}_{l+1}^{p\beta_{m+1}+q\gamma_{m+1}})^{-1}]^{-1}U_m)\neq \emptyset.$$  Let $U_{m+1}$ a non-empty open set with $diam(U_m)<\frac{\varepsilon}{2}$ and \\ $U_{m+1}\subseteq \bigcap_{0\leq p,q\leq k}(\mathcal{T}_{l+1}^{p\beta_{m+1}+q\gamma_{m+1}})^{-1}B_m=\bigcap_{0\leq p,q\leq k}\bigcap_{s=1}^{l+1}(\mathcal{T}_{s}^{p\beta_{m+1}+q\gamma_{m+1}})^{-1}U_m.$
Using this and $(\ast\ast)$ for $j=m$ we have that for every $0\leq i\leq m,$ $$U_{m+1}\subseteq\bigcap_{0\leq p,q\leq k}\bigcap_{s=1}^{l+1}(\mathcal{T}_{s}^{p(\beta_{m+1}+\ldots+\beta_{i+1})
+q(\gamma_{m+1}+\ldots+\gamma_{i+1})})^{-1}U_i.$$
Inductively we can suppose that we have sequences $(U_n)_{n\in \nat\cup\{0\}},\;(\beta_n)_{n\in \nat}$ and $(\gamma_n)_{n\in \nat}$ with $\beta_n+\gamma_n\prec\beta_{n+1}+\gamma_{n+1},$ $dom^+(\beta_n)=\emptyset=dom^-(\gamma_n)$ for every $n\in \nat,$ such that $(\ast\ast)$ holds for every $m\in \nat,$ with $\beta_j+\ldots+\beta_{i+1}\in \mathbb{Q}\setminus \zat$ and $\gamma_j+\ldots+\gamma_{i+1}\in \zat^\ast,$ for every $0\leq i<j\leq m,$ $m\in \nat.$
For every $n\in \nat\cup\{0\}$ let $x_n\in U_n.$ Since $X$ is sequential compact there exists $i_0<j_0$ such that $d(x_{i_0},x_{j_0})<\frac{\varepsilon}{2}.$ According to $(\ast\ast),$ if we set $\beta=\beta_{j_0}+\ldots+\beta_{i_{0}+1}\in \mathbb{Q}\setminus \zat$ and $\gamma=\gamma_{j_0}+\ldots+\gamma_{{i_0}+1}\in \zat^\ast,$ we have that
$\{\mathcal{T}^{p\beta+q\gamma}_1(x_{j_0}),\ldots,\mathcal{T}_{l+1}^{p\beta+q\gamma}(x_{j_0})\}\subseteq U_{i_0}$ for every $0\leq p,q\leq k.$ Also, $x_{i_0}\in U_{i_0},$ ${d(x_{i_0},x_{j_0})<\frac{\varepsilon}{2}}$ and $diam(U_{i_0})<\frac{\varepsilon}{2},$ thus for $x=x_{j_0},$ we have that \begin{center}$\mathcal{T}_i^{p\beta +q\gamma}(x)\in B(x,\varepsilon)$ for every $0\leq p,q\leq k,\;1\leq i\leq l+1.$\end{center} The proof is complete.
\end{proof}

We have already seen that Theorem~\ref{thm:ole} implies a van der Waerden-type theorem for the set of rational numbers  (Theorem~\ref{thm:n12}). Gallai extended van der Waerden theorem to higher dimensions and later Furstenberg and Weiss gave a proof of this result ([FuW]), using topological dynamics theory. Using Theorem~\ref{thm:olem1} we will state and prove, in Theorem~\ref{thm:olem3}, a Gallai-type partition theorem for the set $\qat^l,\;l\in \nat,$ using methods from [Fu].

Note that Theorem~\ref{thm:olem3} can be considered as a geometric version of Theorem~\ref{thm:olem1}.

\begin{thm}\label{thm:olem3} Let $l\in \nat$  and $k_1<k_2$ be arbitrary real numbers. If $\qat^l=Q_1\cup\ldots \cup Q_r,$ $\;r\in \nat,$ then there exists $1\leq i_0\leq r$ such that the set $Q_{i_0}$ has the property that for $k\in \nat,$ if $F\in [\qat^l]^{<\omega}_{>0},$ then there exists $\alpha\in \qat^l,\beta\in \qat\setminus \zat$ and $\gamma\in \zat^\ast$ with $dom^+(\beta)=\emptyset=dom^-(\gamma),$ $\max dom^-(\beta)<k_1<k_2<\min dom^+(\gamma)$ such that $\alpha+(p\beta+q\gamma)F\subseteq Q_{i_0}$  for every $0\leq p,q\leq k.$
\end{thm}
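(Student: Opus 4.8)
The plan is to deduce Theorem~\ref{thm:olem3} from the topological recurrence statement Theorem~\ref{thm:olem1} by the standard Furstenberg correspondence between finite colorings and symbolic dynamical systems, adapted to the rational-index setting as was done for the one-dimensional Theorem~\ref{thm:n12}. First, as in the proof of Theorem~\ref{thm:n12}, it suffices to find, for each fixed $k\in\nat$ and each fixed finite $F\in[\qat^l]^{<\omega}_{>0}$, some cell $Q_j$ satisfying the conclusion: a pigeonhole/diagonal argument over the countably many pairs $(k,F)$ then produces a single $Q_{i_0}$ that works for infinitely many (hence all) $k$ and all $F$. (One should be slightly careful here: the conclusion must hold simultaneously for \emph{all} finite $F$; I would run the diagonalization over an enumeration of $\nat\times[\qat^l]^{<\omega}_{>0}$ and use that a set working for a large $F$ works for all its subsets.)

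For a fixed $k$ and $F=\{f_1,\dots,f_N\}\subseteq\qat^l$, I would set up the symbolic system exactly as in Theorem~\ref{thm:n12}: let $\Omega=\{1,\dots,r\}^{\qat^l}$ with the metric making it compact, and define the point $\omega\in\Omega$ by $\omega(\mathbf v)=i$ iff $\mathbf v\in Q_i$ and $\mathbf v\notin Q_j$ for $j<i$. The key new ingredient is the choice of commuting homeomorphisms. For each $\nu=1,\dots,N$ and each $n\in\zat^\ast$, define a shift $T_{\nu,n}$ on $\Omega$ by $T_{\nu,n}\omega'(\mathbf v)=\omega'(\mathbf v+c_n f_\nu)$, where $c_n=(-1)^{n+1}n!$ for $n>0$ and $c_n=\frac{(-1)^{-n}}{(-n+1)!}$ for $n<0$; these commute among themselves (all are translations of $\qat^l$). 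Each family $\{T_{\nu,n}\}_{n\in\zat^\ast}$ defines a rational indexed family $\mathcal T_\nu$, and the construction is arranged so that for a rational number $\rho$ with $dom(\rho)=\{t_1<\dots<t_m\}$ and digits $\rho_{t_i}$ one has $\mathcal T_\nu^{\rho}\omega'(\mathbf v)=\omega'(\mathbf v+\rho f_\nu)$ when $\rho$ is a single-sign rational (i.e. $dom^+(\rho)=\emptyset$ or $dom^-(\rho)=\emptyset$); in general the noncommutativity of $\mathcal T$ is harmless because we will only ever evaluate $\mathcal T_\nu$ at indices of the form $p\beta+q\gamma$ with $\gamma\prec\beta$ (in the relevant order), so by Remark~\ref{rem:000} $\mathcal T_\nu^{p\beta+q\gamma}=\mathcal T_\nu^{q\gamma}\mathcal T_\nu^{p\beta}$ acts as translation by $(p\beta+q\gamma)f_\nu$ on coordinates. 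Let $X=\overline{\{\mathcal T_{1}^{s_1}\cdots\mathcal T_{N}^{s_N}\omega: s_\nu\in\qat\}}$ (closure of the orbit under the generated group), so that $(X,\mathcal T_1,\dots,\mathcal T_N)$ is a rational dynamical system.

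Now apply Theorem~\ref{thm:olem1} with $\varepsilon=1$: there exist $\beta\in\qat\setminus\zat$, $\gamma\in\zat^\ast$ with $dom^+(\beta)=\emptyset=dom^-(\gamma)$, $\max dom^-(\beta)<k_1<k_2<\min dom^+(\gamma)$, and $x_0\in X$ with $\mathcal T_\nu^{p\beta+q\gamma}(x_0)\in B(x_0,1)$ for all $0\le p,q\le k$ and all $1\le\nu\le N$. Since the metric on $\Omega$ separates the coordinate $\mathbf v=\mathbf 0$, this forces $x_0(\mathbf 0)=x_0((p\beta+q\gamma)f_\nu)$ for all such $p,q,\nu$. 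Finally, approximate $x_0$ by a point $\mathcal T_1^{s_1}\cdots\mathcal T_N^{s_N}\omega$ in the orbit closely enough that it agrees with $x_0$ on the finitely many coordinates $\{(p\beta+q\gamma)f_\nu\}$ and on $\mathbf 0$; writing $\alpha:=\sum_\nu s_\nu f_\nu\in\qat^l$ (more precisely, $\alpha$ is the translation vector induced on the $\mathbf 0$-coordinate by that product, using that each $\mathcal T_\nu^{s_\nu}$ translates by a $\qat^l$-multiple of $f_\nu$), we get $\omega(\alpha)=\omega(\alpha+(p\beta+q\gamma)f_\nu)$ for all $0\le p,q\le k$ and $1\le\nu\le N$, i.e. $\alpha+(p\beta+q\gamma)F\subseteq Q_{\omega(\alpha)}$. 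Setting $i_0=\omega(\alpha)$ completes this instance.

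The main obstacle I anticipate is bookkeeping around the noncommutativity of the rational indexed families and the precise meaning of "translation by $\rho f_\nu$" when $\rho$ is not single-signed: one must verify that the orbit-closure points really do act on the relevant coordinates as genuine $\qat^l$-translations, so that $\alpha$ is a well-defined element of $\qat^l$ and the conclusion $\alpha+(p\beta+q\gamma)F\subseteq Q_{i_0}$ is literally a statement about $\qat^l$. This is handled by the fact that every index we feed into $\mathcal T_\nu$ in the application — namely $p\beta+q\gamma$ and the finite sums $s_\nu$ — can be taken with $dom^-$ and $dom^+$ suitably ordered (or split as a product of a $\qat\setminus\zat$ part and a $\zat^\ast$ part via $\prec$), so Remark~\ref{rem:000} applies and each $\mathcal T_\nu^\rho$ is a translation. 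Everything else (compactness of $\Omega$, the diagonalization, the fact that a cell working for a superset works for subsets) is routine, exactly parallel to Theorem~\ref{thm:n12} and to Furstenberg's treatment of Gallai's theorem in [Fu].
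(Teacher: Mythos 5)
Your proposal is correct and follows essentially the same route as the paper's own proof: the same symbolic system $\Omega=\{1,\ldots,r\}^{\qat^l}$ with shifts along the directions of $F$, the same reduction to a fixed pair $(k,F)$ by diagonalization, the same application of Theorem~\ref{thm:olem1} with $\varepsilon=1$, and the same approximation of $x_0$ by an orbit point to extract $\alpha$. The extra care you take about noncommutativity and the translation action is a reasonable elaboration of details the paper leaves implicit.
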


\begin{proof} Let $\qat^l=Q_1\cup\ldots \cup Q_r,r\in \nat.$ It suffices to produce the set $Q_j$ for a given configuration $F$ and $k\in \nat.$ For since there are only finite many possibilities for $Q_j$ and since a sequence $F_n$ may be chosen where each contains all the preceding ones and any $F$ is contained in one of them, a set $Q_j$ that occurs for infinitely many $F_n$ and $k\in \nat$ will work for all $F$ and all $k.$ That would be the desired $Q_{i_0}.$ So we assume that a finite subset of $\qat^l,\;F=\{\widetilde{e}_1,\ldots,\widetilde{e}_m\}$ and $k\in \nat$ are given. Let $\Omega=\{1,\ldots,r\}^{\qat^l}$ and enumerate $\qat=\bigcup_{n\in \nat}\{q_n\}$ with $q_1=0.$ Then $\Omega$ becomes a compact metric space if we define a metric by: \begin{center}$d(\omega,\omega')=\inf\{\frac{1}{t}:\;\omega(q_{i_1},\ldots,q_{i_l})=\omega'(q_{i_1},\ldots,q_{i_l})$ for $1\leq i_1,\ldots,i_l<t\}.$\end{center} For $1\leq i\leq m$ and $\widetilde{q}\in \qat^l,$ let \begin{center}for $n\in \nat,\;\;T_{i,n}\omega(\widetilde{q})=\omega(\widetilde{q}+(-1)^{n+1}n!\widetilde{e}_i)$ and for $n\in \zat^-,\;\;T_{i,n}\omega(\widetilde{q})=\omega(\widetilde{q}+\frac{(-1)^{-n}}{(-n+1)!}\widetilde{e}_i).$
\end{center} For $1\leq i\leq m$ we form the rational indexed family $\mathcal{T}_i$ which is defined from the commuting homeomorphisms $\{T_{i,n}\}_{n\in \zat^\ast}.$ We define a specific point $\omega\in \Omega$ by $\omega(\widetilde{q})=i\;\Leftrightarrow\; \widetilde{q}\in Q_i$ and $\widetilde{q}\notin Q_j$ for $j<i$ and let \begin{center}$X=\overline{\{\mathcal{T}_1^{s_{1,1}}\ldots \mathcal{T}_1^{s_{1,l_1}}\ldots \mathcal{T}_m^{s_{m,1}}\ldots \mathcal{T}_m^{s_{m,l_m}}\omega,\;s_{i,j}\in \qat,\;l_i\in \nat,\;1\leq j\leq l_i,\;1\leq i\leq m\}}.$\end{center} Then, $(X,\mathcal{T}_1,\ldots,\mathcal{T}_m)$ is a rational dynamical system, so, according to Theorem~\ref{thm:olem1} (for $\varepsilon=1$) there exists $\beta\in \qat\setminus \zat,\;\gamma\in \zat^\ast$ with $dom^+(\beta)=\emptyset=dom^-(\gamma),$ $\max dom^-(\beta)<k_1<k_2<\min dom^+(\gamma)$ and $x_0\in X$ such that $d(\mathcal{T}_i^{p\beta+q\gamma}(x_0),x_0)<1$ for every ${0\leq p,q\leq k,}$ $1\leq i\leq m.$
 Thus \begin{center} $(\ast)\;\;\;x_0((0,\ldots,0))=x_0((p\beta+q\gamma)\widetilde{e}_1)=\ldots=
x_0((p\beta+q\gamma)\widetilde{e}_m)$ for every $0\leq p,q\leq k.$\end{center}  $x_0\in X,$ so it is arbitrary close to some translate of $\omega,$ $\mathcal{T}_1^{s_{1,1}}\ldots \mathcal{T}_1^{s_{1,l_1}}\ldots \mathcal{T}_m^{s_{m,1}}\ldots \mathcal{T}_m^{s_{m,l_m}}\omega,$ for some $s_{i,j}\in \qat,\;l_i\in \nat,\;1\leq j\leq l_i,\;1\leq i\leq m.$

 Let $\widetilde{\alpha}=(s_{1,1}+\ldots +s_{1,l_1})\widetilde{e}_1+\ldots +(s_{m,1}+\ldots+s_{m,l_m})\widetilde{e}_m.$
It follows from $(\ast)$ that \begin{center} $\omega(\widetilde{\alpha})=\omega(\widetilde{\alpha}+(p\beta+q\gamma)\widetilde{e}_1)=\ldots=
\omega(\widetilde{\alpha}+(p\beta+q\gamma)\widetilde{e}_m)$ for every $0\leq p,q\leq k,$\end{center} so, we have $\widetilde{\alpha}+(p\beta+q\gamma)F\subseteq Q_{\omega(\widetilde{\alpha})}$ for every $0\leq p,q\leq k.$
\end{proof}

\begin{remark}\label{rem:002} According to the proof of the previous theorem, we see that we cannot have control to the domain of the coordinates of $\alpha\in \qat^l$ (as in the proof of Theorem~\ref{thm:ole}). For this reason and also since for a rational indexed family $\mathcal{T},$ we have in general that $\mathcal{T}^{p+q}\neq \mathcal{T}^p \mathcal{T}^q$ ($p,q\in \qat$) efforts to prove Theorem~\ref{thm:olem1} or ~\ref{thm:olem2} from Theorem~\ref{thm:olem3} (proving simultaneously the equivalence of these theorems) were fruitless.
\end{remark}

Let give some corollaries of Theorem~\ref{thm:olem3}.

\begin{defn}\label{defn:n10}
Let $l\in \nat$ and $k_1<k_2$ be arbitrary real numbers. We say that the subset $B\subseteq \qat^l$ is a \textit{RVDW(l,k$_1$,k$_2$)}-set if for every $F\in [\qat^l]^{<\omega}_{>0}$ and $k\in \nat$ there exist $\alpha\in \qat^l,\beta\in \qat\setminus\zat$ and $\gamma\in \zat^\ast$ with $dom^+(\beta)=\emptyset=dom^-(\gamma),$ $\max dom^-(\beta)<k_1<k_2<\min dom^+(\gamma)$ such that $\alpha+(p\beta+q\gamma)F\subseteq B$ for every $0\leq p,q\leq k.$
\end{defn}

We will now prove that syndetic sets belongs to the previous family.

\begin{cor}\label{prop:n11}
Let $l\in \nat$ and $k_1<k_2$ be arbitrary real numbers. If $E$ is a syndetic subset of $\qat^l,$ then $E$ is a RVDW(l,k$_1$,k$_2$)-set.
\end{cor}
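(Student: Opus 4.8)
The plan is to deduce this from the Gallai-type partition theorem for $\qat^l$ (Theorem~\ref{thm:olem3}) together with two elementary stability properties of the class of RVDW($l,k_1,k_2$)-sets: it is closed upward (any superset of an RVDW-set is an RVDW-set), and it is closed under translation by elements of $\qat^l$ (since the vector $\alpha$ in Definition~\ref{defn:n10} is permitted to be an arbitrary element of $\qat^l$). Both properties are immediate from the definition, the point being that the witnessing data $\beta\in\qat\setminus\zat$ and $\gamma\in\zat^\ast$ — and in particular the support restrictions $dom^+(\beta)=\emptyset=dom^-(\gamma)$ and $\max dom^-(\beta)<k_1<k_2<\min dom^+(\gamma)$ — never have to be modified under either operation; only $\alpha$ changes.

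First I would unpack syndeticity: by Definition~\ref{defn:n3} there is a finite set $\{g_1,\ldots,g_r\}\subseteq\qat^l$ with $\qat^l=\bigcup_{i=1}^r(E-g_i)$, where $E-g_i=\{s\in\qat^l:\ g_i+s\in E\}$. Refine this finite cover into a finite partition by setting $Q_1=E-g_1$ and $Q_i=(E-g_i)\setminus\bigcup_{j<i}(E-g_j)$ for $2\le i\le r$; then $\qat^l=Q_1\cup\cdots\cup Q_r$ with $Q_i\subseteq E-g_i$ for each $i$. Applying Theorem~\ref{thm:olem3} to this partition produces an index $1\le i_0\le r$ such that $Q_{i_0}$ is an RVDW($l,k_1,k_2$)-set.

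Then, for a given $F\in[\qat^l]^{<\omega}_{>0}$ and $k\in\nat$, Theorem~\ref{thm:olem3} supplies $\alpha\in\qat^l$, $\beta\in\qat\setminus\zat$, $\gamma\in\zat^\ast$ with $dom^+(\beta)=\emptyset=dom^-(\gamma)$, $\max dom^-(\beta)<k_1<k_2<\min dom^+(\gamma)$, and $\alpha+(p\beta+q\gamma)F\subseteq Q_{i_0}$ for all $0\le p,q\le k$. Since $Q_{i_0}\subseteq E-g_{i_0}$, translating by $g_{i_0}$ yields $(\alpha+g_{i_0})+(p\beta+q\gamma)F\subseteq E$ for all $0\le p,q\le k$; as $\alpha+g_{i_0}\in\qat^l$ while $\beta,\gamma$ are untouched, this is exactly the RVDW($l,k_1,k_2$)-property for $E$ relative to the configuration $F$ and parameter $k$. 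Since $F$ and $k$ were arbitrary, $E$ is an RVDW($l,k_1,k_2$)-set.

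There is essentially no serious obstacle here: the entire content sits in Theorem~\ref{thm:olem3}, and the remaining work is the bookkeeping of the cover-to-partition refinement and the observation that the RVDW class is closed upward and under $\qat^l$-translation. The only point deserving a word of care is that the delicate localization data $\beta,\gamma$ and their domain constraints are carried through verbatim from Theorem~\ref{thm:olem3}, so the hypothesis $k_1<k_2$ and the stated conclusions about $dom^-(\beta)$ and $dom^+(\gamma)$ require no further argument.
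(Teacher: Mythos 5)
Your proof is correct and follows essentially the same route as the paper: cover $\qat^l$ by finitely many translates of $E$ via syndeticity, apply Theorem~\ref{thm:olem3} to conclude that one translate is an RVDW($l,k_1,k_2$)-set, and finish by translation invariance. The only difference is that you spell out the cover-to-partition refinement and the upward-closure observation, which the paper leaves implicit.
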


\begin{proof} Let $E$ be a syndetic subset of $\qat^l$ and $k_1<k_2$ be arbitrary real numbers. Then, $\qat^l=\bigcup_{x\in F}(E+x)$ for some $F\in [\qat^l]^{<\omega}_{>0}.$ According to Theorem~\ref{thm:olem3} there exists $x_0\in F$ such that $E+x_0$ is a RVDW($l,k_1,k_2$)-set. So, $E$ is a RVDW($l,k_1,k_2$)-set, since this property is translation invariant.
\end{proof}

Theorem~\ref{thm:olem3} has implications for functions on large chunks of $\qat^l.$

\begin{cor}\label{thm:n13}
Let $F\in [\qat^l]^{<\omega}_{>0},\;l\in \nat,\;k_1<k_2$ be arbitrary real numbers, $k\in \nat$ and $r\in \nat.$  There exist $n_0\equiv n_0(l,k,k_1,k_2,r,F)\in \nat$ such that, if $\qat^l_n,\;n\in \nat$ denotes the set of vectors in $\qat^l$ with components between $-n$ and $n,$ then whenever $n\geq n_0$ and $\qat^l_n=Q_1\cup\ldots \cup Q_r,$ there exist $1\leq i_0\leq r,\;\alpha\in \qat^l,\;\beta\in \qat\setminus\zat$ and $\gamma\in \zat^\ast$ with $dom^+(\beta)=\emptyset=dom^-(\gamma),$ $\max dom^-(\beta)<k_1<k_2<\min dom^+(\gamma)$ such that \begin{center} $\alpha+(p\beta+q\gamma)F\subseteq Q_{i_0}$ for every $0\leq p,q\leq k.$ \end{center}
\end{cor}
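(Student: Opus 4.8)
The plan is to deduce this finitary statement from the infinitary Theorem~\ref{thm:olem3} by a routine compactness (finitization) argument; no new dynamics is required. Fix $l,k,r\in\nat$, real numbers $k_1<k_2$, and $F\in[\qat^l]^{<\omega}_{>0}$, and suppose for contradiction that no $n_0$ as in the statement exists. Then for every $m\in\nat$ there is a window size $N_m\ge m$ and a partition $\qat^l_{N_m}=Q_1^{(m)}\cup\cdots\cup Q_r^{(m)}$ admitting \emph{no} admissible monochromatic configuration, i.e.\ no $\alpha\in\qat^l$, $\beta\in\qat\setminus\zat$, $\gamma\in\zat^\ast$ with $dom^+(\beta)=\emptyset=dom^-(\gamma)$, $\max dom^-(\beta)<k_1<k_2<\min dom^+(\gamma)$ and $\alpha+(p\beta+q\gamma)F\subseteq Q_{i_0}^{(m)}$ for all $0\le p,q\le k$ and some $1\le i_0\le r$. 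Passing to a subsequence I may assume $N_1<N_2<\cdots$.

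Next I would take a limit colouring. Enumerate $\qat^l=\{v_1,v_2,\dots\}$ and regard each partition above as a colouring $c_m\colon\qat^l_{N_m}\to\{1,\dots,r\}$. For each fixed $j$ the value $c_m(v_j)$ is defined for all sufficiently large $m$ and lies in the finite set $\{1,\dots,r\}$, so by a diagonal argument (equivalently, by compactness of $\{1,\dots,r\}^{\qat^l}$ in the product topology) there is an infinite $M\subseteq\nat$ along which $c_m(v_j)$ is eventually constant, for every $j$. Let $c\colon\qat^l\to\{1,\dots,r\}$, $c(v_j)=\lim_{m\in M}c_m(v_j)$, and let $\qat^l=Q_1\cup\cdots\cup Q_r$ be the induced partition. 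Applying Theorem~\ref{thm:olem3} to this partition with the given $k$ and $F$ yields $1\le i_0\le r$, $\alpha\in\qat^l$, $\beta\in\qat\setminus\zat$ and $\gamma\in\zat^\ast$ with the required support conditions such that the finite set $\mathcal C=\{\alpha+(p\beta+q\gamma)v:\ v\in F,\ 0\le p,q\le k\}$ is contained in $Q_{i_0}$.

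Finally I would pull this configuration back to a finite window. Since $\mathcal C$ is finite it lies inside $\{v_1,\dots,v_J\}$ for some $J$, and all coordinates of its vectors are bounded in absolute value by some $K\in\nat$. Choose $m\in M$ large enough that $N_m\ge K$ and that $c_m(v_j)=c(v_j)$ for all $j\le J$ (possible since $M$ is infinite and only finitely many indices are involved). Then $\mathcal C\subseteq\qat^l_{N_m}$ and $c_m$ is constantly equal to $i_0$ on $\mathcal C$, i.e.\ $\alpha+(p\beta+q\gamma)F\subseteq Q_{i_0}^{(m)}$ for all $0\le p,q\le k$, contradicting the choice of the partition $\qat^l_{N_m}=Q_1^{(m)}\cup\cdots\cup Q_r^{(m)}$. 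The argument is essentially bookkeeping; the only points deserving care are that $\qat^l_n$ is itself an infinite set --- so the compactness is carried out over the window parameter $n$, with the limit object a colouring of the countable set $\qat^l$ handled by the diagonal/product-topology step --- and that the finite configuration produced by Theorem~\ref{thm:olem3} genuinely fits inside $\qat^l_{N_m}$ once $N_m$ is large, which holds because a finite set of rationals has bounded coordinates. I do not expect any genuine obstacle beyond this.
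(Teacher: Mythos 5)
Your proposal is correct and follows essentially the same route as the paper: negate the statement, take a limit colouring of $\qat^l$ by compactness of $\{1,\dots,r\}^{\qat^l}$, apply Theorem~\ref{thm:olem3} to the limit, and pull the resulting finite configuration back into a sufficiently large window. Your version is in fact slightly more carefully worded than the paper's (you only require agreement of $c_m$ with the limit on a finite set containing the configuration, which is exactly what product-topology convergence provides, whereas the paper loosely asserts agreement on the infinite set $\qat^l_n$).
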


\begin{proof}
Suppose that for $n\rightarrow \infty$ we can find partitions such that the conclusion doesn't hold. Consider the corresponding functions from $\qat^l_n$ to $\Lambda=\{1,\ldots,r\}$ which are defined from these partitions and extend them arbitrary to $\qat^l$ to obtain a point $\omega_n\in \Lambda^{\qat^l}.$ Let $\omega$ a limit point of $(\omega_n)_{n\in \nat}.$ According to Theorem~\ref{thm:olem3} there exist $\alpha\in \qat^l,\;\beta\in \qat\setminus\zat$ and $\gamma\in \zat^\ast$ with $dom^+(\beta)=\emptyset=dom^-(\gamma),$ $\max dom^-(\beta)<k_1<k_2<\min dom^+(\gamma)$ such that $\omega$ is constant on $\alpha+(p\beta+q\gamma)F$ for every $0\leq p,q\leq k.$ For $n$ large enough, we have that $\alpha+(p\beta+q\gamma)F\subseteq Q_n^l$ for every $0\leq p,q\leq k$ and that for $n'$ large $\omega_{n'}|_{Q_n^l}=\omega|_{Q_n^l},$ a contradiction.
\end{proof}

\subsection*{Notes} Since (as we already noticed in Remark~\ref{rem:000} and Remark~\ref{rem:002}) for a rational indexed family $\mathcal{T}=(\mathcal{T}^q)_{q\in \qat},$ we have in general that $\mathcal{T}^{p+q}\neq \mathcal{T}^p \mathcal{T}^q$ ($p,q\in \qat$), we can't have (with these methods) polynomial extensions of the results of this paragraph.

\section{A Furstenberg-Weiss-type Theorem for rational dynamical systems}

In this section we will prove (in Theorem~\ref{thm:olem4}) a strengthening of Theorem~\ref{thm:A} for rational dynamical systems, namely, we prove that if $(X,\mathcal{T}_1,\ldots,\mathcal{T}_l)$ is a rational dynamical system, $k\in \nat$ and $k_1<k_2$ are arbitrary real numbers that there exist $x\in X$ and sequences $(\beta_n)_{n\in \nat}\subseteq \qat\setminus\zat,$ $(\gamma_n)_{n\in \nat}\subseteq \zat^\ast$ with $dom^+(\beta_1)=\emptyset=dom^-(\gamma_1),$ $\max dom^-(\beta_1)<k_1<k_2<\min dom^+(\gamma_1),$ $\beta_n+\gamma_n\prec\beta_{n+1}+\gamma_{n+1},$ $dom^+(\beta_n)=\emptyset=dom^-(\gamma_n)$ for every $n\in \nat$ such that $\mathcal{T}_i^{p\beta_n+q\gamma_n}(x)\rightarrow x$ for every $0\leq p,q\leq k$ simultaneously for $1\leq i\leq l$ (we call these points \textit{multiple rational recurrent points}). Moreover we prove that Theorem~\ref{thm:olem4} is equivalent to Theorems~\ref{thm:olem1} and ~\ref{thm:olem2} and also that the multiple rational recurrent points consist a residual subset of $X$ (Definition~\ref{defn:n1}).

At this point (as in [Fu] for the analogous dynamical systems related to $\nat$ or $\zat$) observe that if the condition of commutativity of the system is omitted, the conclusion does not holt. For example, let $X=\mathbb{R}\cup\{\infty\},\;T_n(x)=\frac{x}{2},\;n\in \zat^\ast$ and $S_n(x)=x+1,\;n\in \zat^\ast.$ If $\mathcal{T}$ and $\mathcal{S}$ are defined from $\{T_n\}_{n\in \zat^\ast}$ and $\{S_n\}_{n\in \zat^\ast}$ respectively, then, the only recurrent point of $\mathcal{T}$ is $0$ and the only one for $\mathcal{S}$ is $\infty.$ Also, without commutativity it may still happen that the return times of any point are disjoint for the various transformations. For example, let $X=\{-1,1\}^{\qat}$ and $\mathcal{T}$ be the indexed family which is defined from $\{T_n\}_{n\in \zat^\ast}$ with \begin{center} $T_{n}\omega(q)=\omega(q+(-1)^{n+1}n!),\;n\in \nat$ and $T_{n}\omega(q)=\omega(q+\frac{(-1)^{-n}}{(-n+1)!}),\;n\in \zat^-.$
\end{center}
Let $R:X\rightarrow X$ with $R(\omega(q))=\omega(q)$ if $q=0$ and $R(\omega(q))=-\omega(q)$ if $q\neq0$ and let $S_n=RT_nR,\;n\in \zat^\ast.$ Then, if $\mathcal{S}$ is the indexed family which is defined from $\{S_n\}_{n\in \zat^\ast},$ we have that $\mathcal{S}^q=R\mathcal{T}^qR$ for every $q\in \qat.$ $\mathcal{T}^q\omega$ close to $\omega$ implies that $\omega(q)=\omega(0).$ $\mathcal{S}^q\omega$ close to $\omega$ implies that $\mathcal{T}^qR\omega$ is close to $R\omega,$ so $R\omega(q)=R\omega(0),$ thus $-\omega(q)=\omega(0)$ if $q\neq 0.$ We have that $\mathcal{T}^q\omega$ and $\mathcal{S}^q\omega$ cannot be simultaneously close to $\omega.$

\medskip

A strengthening of Theorem~\ref{thm:A} related to rational numbers is the following.

\begin{thm}\label{thm:olem4} Let $(X,\mathcal{T}_1,\ldots,\mathcal{T}_l)$ be a rational dynamical system, $k\in \nat$ and $k_1<k_2$ be arbitrary real numbers. There exists a $x\in X$ and sequences $(\beta_n)_{n\in \nat}\subseteq \qat\setminus\zat,\;(\gamma_n)_{n\in \nat}\subseteq \zat^\ast$ with $dom^+(\beta_1)=\emptyset=dom^-(\gamma_1),$ $\max dom^-(\beta_1)<k_1<k_2<\min dom^+(\gamma_1),$ $\beta_n+\gamma_n\prec\beta_{n+1}+\gamma_{n+1},$ $dom^+(\beta_n)=\emptyset=dom^-(\gamma_n)$ for every $n\in \nat$ such that \begin{center} $\mathcal{T}_i^{p\beta_n+q\gamma_n}(x)\rightarrow x$ for every $0\leq p,q\leq k$ simultaneously for $1\leq i\leq l.$ \end{center}
\end{thm}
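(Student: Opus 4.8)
The plan is to derive Theorem~\ref{thm:olem4} from Theorem~\ref{thm:olem1} by a standard Baire-category / diagonalization argument, exactly paralleling the way the classical multiple Birkhoff theorem (Theorem~\ref{thm:A}) is obtained from its finitary form (Theorem~\ref{thm:B}). First I would reduce to a countable list of approximation conditions: fix the rational dynamical system $(X,\mathcal{T}_1,\ldots,\mathcal{T}_l)$, the parameters $k\in\nat$ and $k_1<k_2$, and a sequence $\varepsilon_m\downarrow 0$. The point to be produced must satisfy, for each $m$, that $d(\mathcal{T}_i^{p\beta+q\gamma}(x),x)<\varepsilon_m$ for all $0\le p,q\le k$ and $1\le i\le l$, for suitable $\beta\in\qat\setminus\zat$, $\gamma\in\zat^\ast$ whose supports can be pushed arbitrarily far out (i.e. $\min\{|t|:t\in dom^-(\beta)\cup dom^+(\gamma)\}$ as large as we like). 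The ``push the supports out'' feature is the mechanism that will eventually let us chain successive choices into an increasing $\prec$-sequence $(\beta_n+\gamma_n)_n$.

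The core step is to show that for each $m$ the set
\[
R_m=\Bigl\{x\in X:\ \exists\,\beta\in\qat\setminus\zat,\ \gamma\in\zat^\ast,\ dom^+(\beta)=\emptyset=dom^-(\gamma),\ \max dom^-(\beta)<k_1<k_2<\min dom^+(\gamma),
\]
\[
\text{supports of }\beta,\gamma\text{ beyond level }m,\ \text{and}\ d(\mathcal{T}_i^{p\beta+q\gamma}(x),x)<\varepsilon_m\ \forall\,0\le p,q\le k,\ 1\le i\le l\Bigr\}
\]
is open and dense in $X$. Openness is immediate from continuity of the finitely many maps $\mathcal{T}_i^{p\beta+q\gamma}$ once $\beta,\gamma$ are fixed. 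For density, fix a nonempty open $U\subseteq X$; replacing $(X,\mathcal{T}_1,\ldots,\mathcal{T}_l)$ by a minimal subsystem meeting $U$ (Proposition~\ref{mds}), or rather using minimality of the orbit closure, one applies Theorem~\ref{thm:olem1} inside a small enough ball; the extra freedom to raise the support level is available because Theorem~\ref{thm:olem1} (through Theorem~\ref{thm:ole} and Theorem~\ref{thm:000}, where one is free to take $\min\{|t|:t\in V^-\cup V^+\}$ as large as desired) produces $\beta,\gamma$ with arbitrarily high supports, and one then transports the recurrent point into $U$ using an element of the orbit that commutes with all the $T_{i,n}$, just as in the proof of Theorem~\ref{thm:n6}. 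Hence each $R_m$ is open dense, so $\bigcap_m R_m$ is a dense $G_\delta$, in particular nonempty; any $x$ in it, together with choices $(\beta_m,\gamma_m)$ witnessing membership in $R_m$ with strictly increasing support levels, gives $\mathcal{T}_i^{p\beta_m+q\gamma_m}(x)\to x$.

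The remaining bookkeeping is to arrange $\beta_n+\gamma_n\prec\beta_{n+1}+\gamma_{n+1}$ and the placement of $dom^-(\beta_1),dom^+(\gamma_1)$ relative to $k_1,k_2$; this is handled by choosing, at stage $n+1$, the support level threshold $m$ large enough that $\min\{|t|: t\in dom^-(\beta_{n+1})\cup dom^+(\gamma_{n+1})\}$ exceeds $\max\{|t|:t\in dom^-(\beta_n)\cup dom^+(\gamma_n)\}$, which by definition of $\prec$ yields $\beta_n+\gamma_n\prec\beta_{n+1}+\gamma_{n+1}$, and for $n=1$ by also requiring the supports to straddle $k_1,k_2$ as in Theorem~\ref{thm:olem1}. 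I expect the main obstacle to be the density argument for $R_m$: one must be careful that passing to a minimal subsystem and transporting the recurrent point back into $U$ does not disturb the prescribed support constraints on $\beta,\gamma$ — but since those constraints depend only on $\beta,\gamma$ and not on the base point, and the transporting homeomorphism commutes with every $T_{i,n}$, the constraints are preserved. The equivalence with Theorems~\ref{thm:olem1} and~\ref{thm:olem2} claimed in the section heading follows because specializing the sequence in Theorem~\ref{thm:olem4} to a single term recovers Theorem~\ref{thm:olem1}, and the chain of implications established in the proof of Theorems~\ref{thm:olem1} and~\ref{thm:olem2} closes the loop.
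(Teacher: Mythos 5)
Your proposal is correct and follows essentially the same route as the paper: the paper defines the sets $F_s$ of points admitting admissible $\beta,\gamma$ with $d(\mathcal{T}_i^{p\beta+q\gamma}(x),x)<1/s$, shows each complement $X\setminus F_s$ is closed with empty interior via minimality, a finite cover $X=\bigcup_j S_j^{-1}V$, uniform continuity, and commutation of the $S_j$ with the $T_{i,n}$, and then concludes by Baire's category theorem exactly as you do. Your explicit incorporation of the support-level threshold into the sets $R_m$ and the resulting $\prec$-chaining of the witnesses $(\beta_n,\gamma_n)$ is a welcome bookkeeping refinement that the paper leaves implicit.
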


\begin{proof} For every $s>0$ let

\begin{center} $F_s=\{x\in X:$ there exists $\beta\in \qat\setminus\zat,\gamma\in \zat^\ast$ with $dom^+(\beta)=\emptyset=dom^-(\gamma),\;\;\;\;\;\;\;\;\;\;\;\;\;\;\;\;\;\;$\\ $\max dom^-(\beta)<k_1<k_2<\min dom^+(\gamma)$ such that $d(\mathcal{T}_{i}^{p\beta+q\gamma}(x),x)< \frac{1}{s}\;\;\;\;\;\;\;$\\ for every $0\leq p,q\leq k,$ $1\leq i\leq l\}.\;\;\;\;\;\;\;\;\;\;\;\; \;\;\;\;\;\;\;\;\;\;\;\; \;\;\;\;\;\;\;\;\;\;\;\; \;\;\;\;\;\;\;\;\;\;\;\; \;\;\;\;\;\;\;\;\;\;\;\;\;\;\;\;$\end{center}

\noindent If the conclusion is not true, then $X=\bigcup_{n\in \nat}(X\setminus F_n).$ We claim that for every $n\in \nat$ we have that $(X\setminus F_n)^{\circ}=\emptyset,$ a contradiction according to Baire's Category Theorem, since every $X\setminus F_n$ is closed.

Suppose that $(X\setminus F_{n_0})^{\circ}\neq \emptyset$ for some $n_0\in \nat.$ Without loss of generality we can suppose that $(X,\mathcal{T}_1,\ldots,\mathcal{T}_l)$ is minimal. For $1\leq i\leq l,$ let $\mathcal{T}_i$ be defined from the commuting homeomorphisms $\{T_{i,n}\}_{n\in \zat^\ast}$ of $X.$ If $G$ is the group of homeomorphisms generated by $\{T_{i,n}\}_{n\in \zat^\ast},\;1\leq i\leq l,$ then $X=S_1^{-1}(X\setminus F_{n_0})^{\circ}\cup\ldots\cup S_m^{-1}(X\setminus F_{n_0})^{\circ}$ for some $S_1,\ldots,S_m\in G,m\in \nat.$ Choose $\delta>0$ such that if $y,z\in X$ with $d(y,z)<\delta$ then $d(S_i(y),S_i(z))<\frac{1}{n_0}$ for every $1\leq i\leq m.$

We claim that if $x\in S_j^{-1}(X\setminus F_{n_0})^{\circ}$ for some $1\leq j\leq m,$ then $x\in X\setminus F_{\frac{1}{\delta}}.$ Indeed, if there are $\beta\in \qat\setminus\zat,\gamma\in \zat^\ast$ with $dom^+(\beta)=\emptyset=dom^-(\gamma),$ $\max dom^-(\beta)<k_1<k_2<\min dom^+(\gamma)$ such that $d(\mathcal{T}_{i}^{p\beta+q\gamma}(x),x)< \delta$ for every $0\leq p,q\leq k,$ $1\leq i\leq l,$ then  $d(S_j(\mathcal{T}_{i}^{p\beta+q\gamma}(x)),S_j(x))=d(\mathcal{T}_{i}^{p\beta+q\gamma}(S_j(x)),S_j(x))<\frac{1}{n_0}$ for every $0\leq p,q\leq k,$ $1\leq i\leq l,$ since $S_j$ commutes with $\{T_{i,n}\}_{n\in \zat^\ast},1\leq i\leq l.$ Since $S_j(x)\in (X\setminus F_{n_0})^{\circ},$ we have a contradiction.

Since every $x\in X$ is in $S_j^{-1}(X\setminus F_{n_0})^{\circ}$ for some $1\leq j\leq m,$ we have proved that $x\in X\setminus F_{\frac{1}{\delta}}$ for every $x\in X,$ so, $X\setminus F_{\frac{1}{\delta}}=X$ a contradiction according to Theorem~\ref{thm:olem1}.
\end{proof}

\begin{defn}\label{defn:n1}
A subset $U\subseteq X$ is called \textit{residual} if it contains an enumerable intersection of dense sets.
\end{defn}

Theorem~\ref{thm:olem4} gives the following.

\begin{prop}\label{prop:n2}
 If $(X,\mathcal{T}_1,\ldots,\mathcal{T}_l)$ is a minimal rational dynamical system, then the set of multiple recurrent points of $X$ is residual.
\end{prop}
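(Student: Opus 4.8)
The plan is to reuse the family of sets $F_s$ introduced in the proof of Theorem~\ref{thm:olem4} and to observe that the argument there establishes more than was claimed in that statement. Recall that for each $s>0$ we set
\[
F_s=\{x\in X:\ \exists\,\beta\in\qat\setminus\zat,\ \gamma\in\zat^\ast\ \text{with}\ dom^+(\beta)=\emptyset=dom^-(\gamma),\ \max dom^-(\beta)<k_1<k_2<\min dom^+(\gamma),
\]
\[
\text{such that}\ d(\mathcal{T}_i^{p\beta+q\gamma}(x),x)<\tfrac1s\ \text{for all}\ 0\le p,q\le k,\ 1\le i\le l\}.
\]
First I would fix a countable basis $\{k_1^{(j)}<k_2^{(j)}\}_{j\in\nat}$ of pairs of rationals (so that every real interval contains one of them) and, more to the point, simply note that the conclusion we want is that the set $R$ of multiple rational recurrent points — those $x$ admitting sequences $(\beta_n),(\gamma_n)$ with the prescribed domain separation, $\beta_n+\gamma_n\prec\beta_{n+1}+\gamma_{n+1}$, and $\mathcal{T}_i^{p\beta_n+q\gamma_n}(x)\to x$ — contains $\bigcap_{n\in\nat}F_n$. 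Indeed, if $x\in\bigcap_n F_n$, then for each $n$ we may pick $\beta^{(n)},\gamma^{(n)}$ witnessing membership in $F_n$; since the domains of the witnesses for distinct $n$ can be shifted arbitrarily far out (the definition of $F_n$ places no upper bound on $\max dom^+(\gamma)$ or lower bound on $\min dom^-(\beta)$ beyond the fixed $k_1,k_2$), we may choose them successively so that $\beta^{(n)}+\gamma^{(n)}\prec\beta^{(n+1)}+\gamma^{(n+1)}$, exactly as in the inductive construction inside the proof of Theorems~\ref{thm:olem1} and~\ref{thm:olem2}. This gives the required sequences, and $d(\mathcal{T}_i^{p\beta^{(n)}+q\gamma^{(n)}}(x),x)<\tfrac1n\to0$, so $x\in R$.

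Next I would establish that each $F_n$ is dense (equivalently, that $X\setminus F_n$ has empty interior). This is precisely the content of the main claim inside the proof of Theorem~\ref{thm:olem4}: assuming $(X\setminus F_{n_0})^{\circ}\neq\emptyset$, one passes to a minimal subsystem (here we are already given that $(X,\mathcal{T}_1,\ldots,\mathcal{T}_l)$ is minimal), covers $X$ by finitely many preimages $S_j^{-1}(X\setminus F_{n_0})^{\circ}$ under group elements $S_j\in G$, uses uniform continuity to produce $\delta>0$, and then shows that every $x\in X$ lies in $X\setminus F_{1/\delta}$ — contradicting Theorem~\ref{thm:olem1}, which guarantees some point of $X$ lies in $F_{1/\delta}$. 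Hence $\overline{F_n}=X$ for every $n$. Each $F_n$ is also itself open, or at any rate $X\setminus F_n$ is closed (the defining condition is an open condition on $x$ once $\beta,\gamma$ are fixed, and $F_n$ is the union over the countably many admissible $\beta,\gamma$), so $\bigcap_n F_n$ is a countable intersection of dense sets and therefore $R\supseteq\bigcap_n F_n$ is residual in the sense of Definition~\ref{defn:n1}.

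The only genuine subtlety — and the step I would flag as the main obstacle — is the domain-separation bookkeeping in the first paragraph: one must be sure that membership in $F_n$ can always be witnessed by $\beta^{(n)},\gamma^{(n)}$ whose relevant domains lie \emph{beyond} those already chosen for $F_1,\ldots,F_{n-1}$, so that the relation $\prec$ holds along the sequence. This follows because the recurrence furnished by Theorem~\ref{thm:olem1} (and hence the nonemptiness of $F_n$) is insensitive to translating the "variable part" of the witnesses to arbitrarily large indices — concretely, in the proof of Theorem~\ref{thm:ole} the sets $V_n^-,V_n^+$ produced by Theorem~\ref{thm:000} can be taken with $\min\{|t|:t\in V_n^-\cup V_n^+\}$ as large as we please — so one simply re-runs the argument with $k_1,k_2$ replaced by $\max dom^+(\gamma^{(n-1)}),\ \min dom^-(\beta^{(n-1)})$ shifted appropriately. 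Once this is granted the rest is the Baire category argument already carried out. I would therefore organize the write-up as: (i) reduce the proposition to showing $\bigcap_n F_n\subseteq R$ and that each $F_n$ is dense open with closed complement; (ii) prove $\bigcap_n F_n\subseteq R$ via the successive choice of witnesses with increasing domains; (iii) cite the interior-emptiness argument from the proof of Theorem~\ref{thm:olem4} for density; (iv) conclude by Definition~\ref{defn:n1}.
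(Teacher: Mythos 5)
Your proposal follows essentially the same route as the paper: the paper's own proof is a one-line appeal to the fact that each $F_n$ from the proof of Theorem~\ref{thm:olem4} is dense, open and sits inside the set of multiple rational recurrent points, and your write-up is a careful elaboration of exactly that Baire-category argument, with the density of $F_n$ borrowed from the same place. The one point where you are (rightly) more cautious than the paper --- chaining the witnesses so that $\beta^{(n)}+\gamma^{(n)}\prec\beta^{(n+1)}+\gamma^{(n+1)}$ --- is resolved correctly in spirit, but to make it airtight you should intersect over the countable family of sets $F_n$ with the cut points $k_1<k_2$ replaced by shifted values (each still dense open by the same argument), rather than asserting that a fixed point of $F_n$ automatically admits witnesses with arbitrarily remote domains merely because the definition of $F_n$ imposes no such bound.
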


\begin{proof}
It follows from Theorem~\ref{thm:olem4}, since for every $n\in \nat$ the set $F_n$ is dense, open and $\emptyset\neq F_n\subseteq\{$multiple rational recurrent points$\}.$
\end{proof}

\begin{prop}\label{prop:002} Theorems~\ref{thm:olem1},~\ref{thm:olem2} and ~\ref{thm:olem4} are equivalent.
\end{prop}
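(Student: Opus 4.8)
The plan is to establish a cycle of implications among Theorems~\ref{thm:olem1}, \ref{thm:olem2} and \ref{thm:olem4}, reusing as much of the machinery already developed as possible. Two of the three implications are essentially in hand: the proof of Theorems~\ref{thm:olem1} and \ref{thm:olem2} above proceeds by induction on $l$ showing \ref{thm:olem1}$(l)\Rightarrow$\ref{thm:olem2}$(l)$ and \ref{thm:olem2}$(l)\Rightarrow$\ref{thm:olem1}$(l+1)$, together with the base case \ref{thm:olem1}$(1)$ coming from Theorem~\ref{thm:ole}; in particular \ref{thm:olem1} and \ref{thm:olem2} hold unconditionally and are interderivable level by level, so I would first record that \ref{thm:olem1}$\Leftrightarrow$\ref{thm:olem2}. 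Moreover the proof of Theorem~\ref{thm:olem4} given above deduces \ref{thm:olem4} from \ref{thm:olem1} via a Baire category argument on the closed sets $X\setminus F_s$, so \ref{thm:olem1}$\Rightarrow$\ref{thm:olem4} is already available. Hence the only new content needed is the reverse implication \ref{thm:olem4}$\Rightarrow$\ref{thm:olem1} (equivalently \ref{thm:olem4}$\Rightarrow$\ref{thm:olem2}), which closes the cycle.

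For \ref{thm:olem4}$\Rightarrow$\ref{thm:olem1}, I would argue as follows. Fix a rational dynamical system $(X,\mathcal{T}_1,\ldots,\mathcal{T}_l)$, $k\in\nat$, real numbers $k_1<k_2$, and $\varepsilon>0$. Apply Theorem~\ref{thm:olem4} to obtain a point $x\in X$ and sequences $(\beta_n)_{n\in\nat}\subseteq\qat\setminus\zat$, $(\gamma_n)_{n\in\nat}\subseteq\zat^\ast$ with $dom^+(\beta_1)=\emptyset=dom^-(\gamma_1)$, $\max dom^-(\beta_1)<k_1<k_2<\min dom^+(\gamma_1)$, $\beta_n+\gamma_n\prec\beta_{n+1}+\gamma_{n+1}$ and $dom^+(\beta_n)=\emptyset=dom^-(\gamma_n)$ for all $n$, such that $\mathcal{T}_i^{p\beta_n+q\gamma_n}(x)\to x$ as $n\to\infty$ for every $0\leq p,q\leq k$ and every $1\leq i\leq l$. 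Since there are only finitely many pairs $(p,q,i)$ with $0\leq p,q\leq k$ and $1\leq i\leq l$, the convergence is uniform over this finite index set, so there exists $n_0\in\nat$ with $d(\mathcal{T}_i^{p\beta_{n_0}+q\gamma_{n_0}}(x),x)<\varepsilon$ for all such $p,q,i$. Now set $x_0=x$, $\beta=\beta_{n_0}$ and $\gamma=\gamma_{n_0}$; by construction $\beta\in\qat\setminus\zat$, $\gamma\in\zat^\ast$, $dom^+(\beta)=\emptyset=dom^-(\gamma)$, and the support condition $\max dom^-(\beta)<k_1<k_2<\min dom^+(\gamma)$ follows from $\beta_1+\gamma_1\prec\beta_{n_0}+\gamma_{n_0}$ (for $n_0>1$) combined with the hypothesis on $\beta_1,\gamma_1$, since $\prec$ forces $\max dom^-(\beta_{n_0})<\max dom^-(\beta_1)<k_1$ and $k_2<\min dom^+(\gamma_1)<\min dom^+(\gamma_{n_0})$ — here one must check the direction of the inequalities in the definition of $\prec$ carefully, but it is exactly arranged so that later terms have supports that are "more extreme" on both sides. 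Thus $\mathcal{T}_i^{p\beta+q\gamma}(x_0)\in B(x_0,\varepsilon)$ for every $0\leq p,q\leq k$ and $1\leq i\leq l$, which is the conclusion of Theorem~\ref{thm:olem1}.

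Finally I would assemble the cycle: \ref{thm:olem1}$\Rightarrow$\ref{thm:olem4} (proof of Theorem~\ref{thm:olem4}), \ref{thm:olem4}$\Rightarrow$\ref{thm:olem1} (the paragraph above), and \ref{thm:olem1}$\Leftrightarrow$\ref{thm:olem2} (from the joint proof of those two theorems), giving the equivalence of all three. The only genuinely delicate point is the bookkeeping with the order relation $\prec$ in the support-preservation step — one must verify that the initial support constraint on $\beta_1,\gamma_1$ propagates to $\beta_{n_0},\gamma_{n_0}$ in the correct direction; everything else is a routine finite-uniformity argument. (I would also remark that, since \ref{thm:olem1} holds unconditionally, this proposition is really a statement about logical interderivability of the three formulations rather than a new existence result.)
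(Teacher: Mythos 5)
Your proposal is correct and follows essentially the same strategy as the paper: take the already-established equivalence of Theorems~\ref{thm:olem1} and~\ref{thm:olem2} and the implication \ref{thm:olem1}$\Rightarrow$\ref{thm:olem4} for granted, then close the cycle by applying Theorem~\ref{thm:olem4} and choosing $n_0$ large enough, with the support condition on $\beta_{n_0},\gamma_{n_0}$ propagated from $\beta_1,\gamma_1$ via the chain $\beta_n+\gamma_n\prec\beta_{n+1}+\gamma_{n+1}$ (your reading of $\prec$ is right: $\max dom^-(\beta_{n_0})<\min dom^-(\beta_1)\leq\max dom^-(\beta_1)<k_1$ and symmetrically on the positive side). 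The one genuine difference is the target of the closing arrow: the paper proves \ref{thm:olem4}$\Rightarrow$\ref{thm:olem2} directly, which requires producing a multiple rational recurrent point \emph{inside} the prescribed open set $U$ of a minimal system — this is why it invokes ``the proof of'' Theorem~\ref{thm:olem4} (i.e.\ the residuality of such points, as in Proposition~\ref{prop:n2}) rather than just its statement — whereas you prove \ref{thm:olem4}$\Rightarrow$\ref{thm:olem1}, which only needs the single point furnished by the statement of Theorem~\ref{thm:olem4} and is therefore marginally more self-contained. Both routes are valid; yours avoids minimality and the Baire-category input in the last step, while the paper's gives the open-set formulation~\ref{thm:olem2} without passing back through~\ref{thm:olem1}.
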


\begin{proof} We have already seen that Theorems~\ref{thm:olem1} and ~\ref{thm:olem2} are equivalent and that Theorem~\ref{thm:olem4} follows from Theorem~\ref{thm:olem1}. Let $l\in \nat,(X,\mathcal{T}_1,\ldots,\mathcal{T}_l,R)$ be a minimal rational dynamical system, $k\in \nat$ and $k_1<k_2$ be arbitrary real numbers. According to the proof of Theorem~\ref{thm:olem4}, if $U$ is a non-empty set in $X$ then there exist a $x\in U$ and sequences $(\beta_n)_{n\in \nat}\subseteq \qat\setminus\zat,\;(\gamma_n)_{n\in \nat}\subseteq \zat^\ast$ with $dom^+(\beta_1)=\emptyset=dom^-(\gamma_1),$ $\max dom^-(\beta_1)<k_1<k_2<\min dom^+(\gamma_1),$ $\beta_n+\gamma_n\prec\beta_{n+1}+\gamma_{n+1},$ $dom^+(\beta_n)=\emptyset=dom^-(\gamma_n)$ for every $n\in \nat$ such that $\mathcal{T}_i^{p\beta_n+q\gamma_n}(x)\rightarrow x$ for every $0\leq p,q\leq k,$ $1\leq i\leq l.$ So, there exists $n_0\in \nat$ such that $\mathcal{T}_i^{p\beta_{n_0}+q\gamma_{n_0}}(x)\in U$ for every $0\leq p,q\leq k,$ $1\leq i\leq l.$

Hence $\;\;\;\bigcap_{0\leq p,q\leq k}(U\cap (\mathcal{T}_1^{p\beta_{n_0}+q\gamma_{n_0}})^{-1}U\cap\ldots\cap (\mathcal{T}_l^{p\beta_{n_0}+q\gamma_{n_0}})^{-1}U)\neq \emptyset,$ the conclusion.
\end{proof}

\begin{defn}\label{defn:n15}
Let $k\in \nat$ and $k_1<k_2$ arbitrary real numbers. A subset $A\subseteq \qat$ is called  \textit{RIP(k,k$_1$,k$_2$)-set} if there exist sequences $(\beta_n)_{n\in \nat}\subseteq \qat\setminus\zat,$ $(\gamma_n)_{n\in \nat}\subseteq \zat^\ast$ with $dom^+(\beta_1)=\emptyset=dom^-(\gamma_1),$ $\max dom^-(\beta_1)<k_1<k_2<\min dom^+(\gamma_1),$ $\beta_n+\gamma_n\prec\beta_{n+1}+\gamma_{n+1},$ $dom^+(\beta_n)=\emptyset=dom^-(\gamma_n)$ for every $n\in \nat$ such that $A$ consists of the numbers $p\beta_i+q\gamma_i,\;0\leq p,q\leq k$ together with all the finite sums in the form

 $p(\beta_{i_1}+\ldots+\beta_{i_s})+q(\gamma_{i_1}+\ldots+\gamma_{i_s}),\;0\leq p,q\leq k$ with $i_1<\ldots<i_s.$
\end{defn}

\begin{prop}\label{thm:n14}
Let $(X,\mathcal{T}_1,\ldots,\mathcal{T}_l)$ a rational dynamical system, $k\in \nat,k_1<k_2$ arbitrary real numbers and $x_0$ a multiple rational recurrent point of $X.$ Then, for every $\delta>0$ the set  $R_{\delta}=\{q\in \qat:\;d(\mathcal{T}^q_i(x_0),x_0)<\delta$ for every $1\leq i\leq l\}$ contains a RIP(k,$k_1,k_2$)-set.
\end{prop}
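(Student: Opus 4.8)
The plan is to construct the desired $\operatorname{RIP}(k,k_1,k_2)$-set inside $R_\delta$ by iterating Theorem~\ref{thm:olem1}, much as one uses single recurrence to build an IP-set in the classical setting. Since $x_0$ is a multiple rational recurrent point, Theorem~\ref{thm:olem4} gives sequences $(\beta_n)_{n\in\nat}\subseteq\qat\setminus\zat$, $(\gamma_n)_{n\in\nat}\subseteq\zat^\ast$ with the prescribed domain conditions, $\beta_n+\gamma_n\prec\beta_{n+1}+\gamma_{n+1}$, and $\mathcal{T}_i^{p\beta_n+q\gamma_n}(x_0)\to x_0$ for all $0\leq p,q\leq k$ and $1\leq i\leq l$. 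First I would note that, by passing to a subsequence (a diagonal argument over the finitely many pairs $(p,q)$ and indices $i$), one may assume $d(\mathcal{T}_i^{p\beta_n+q\gamma_n}(x_0),x_0)<\delta$ for \emph{all} $n\in\nat$, all $0\leq p,q\leq k$, and all $1\leq i\leq l$; this subsequence still inherits the $\prec$-chain property and the domain constraints at its first term (the latter only needs to be arranged once, for the new first index). So each "atom" $p\beta_n+q\gamma_n$ lands in $R_\delta$.

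The heart of the argument is to handle the finite sums $p(\beta_{i_1}+\dots+\beta_{i_s})+q(\gamma_{i_1}+\dots+\gamma_{i_s})$ with $i_1<\dots<i_s$. Here I would exploit Remark~\ref{rem:000}: because $dom^+(\beta_n)=\emptyset=dom^-(\gamma_n)$ and $\beta_n+\gamma_n\prec\beta_{n+1}+\gamma_{n+1}$ (which unpacks to $\max dom^-(\beta_{n+1})<\min dom^-(\beta_n)$ and $\max dom^+(\gamma_n)<\min dom^+(\gamma_{n+1})$), consecutive terms are $\prec$-separated, so $\mathcal{T}_i^{a+b}=\mathcal{T}_i^{a}\mathcal{T}_i^{b}$ whenever $b\prec a$. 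Concretely, for a sum over $i_1<\dots<i_s$, writing $a_j=p\beta_{i_j}+q\gamma_{i_j}$, one has $a_1\prec a_2\prec\dots\prec a_s$ (each $a_j$ inherits the separation from the $\prec$-chain of the $\beta_n+\gamma_n$, since scaling by the same $p$ on the negative part and $q$ on the positive part preserves the domain inequalities, and $p(\beta_{i_1}+\dots+\beta_{i_s})$ has empty $dom^+$ while $q(\gamma_{i_1}+\dots+\gamma_{i_s})\in\zat^\ast$ so its $dom^-$ is empty), hence $\mathcal{T}_i^{p(\beta_{i_1}+\dots+\beta_{i_s})+q(\gamma_{i_1}+\dots+\gamma_{i_s})}=\mathcal{T}_i^{a_s}\mathcal{T}_i^{a_{s-1}}\cdots\mathcal{T}_i^{a_1}$. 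This reduces bounding $d(\mathcal{T}_i^{\text{sum}}(x_0),x_0)$ to a telescoping estimate.

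To make the telescoping work I would not use the raw sequence from Theorem~\ref{thm:olem4} but refine it: choose the $n$-th term only after the previous ones are fixed, using uniform continuity of the finitely many composition maps $\mathcal{T}_i^{a}$ that can arise from the earlier atoms (there are finitely many since $p,q$ range over a finite set and at stage $n$ only finitely many earlier indices exist) to force the new atom to move points so little that all the previously-built partial sums, when composed with it, still stay within $\delta$ of $x_0$. This is the standard IP-set / Hindman-style nested construction; the one genuinely new feature is keeping track of the $\prec$-structure so that the "sum equals composition" identity is available at every stage — and this is automatic as long as we always pick the new index large enough in the $\prec$-order, which Theorem~\ref{thm:olem4} (or rather its proof, which produces the $\beta_n,\gamma_n$ one at a time) permits.

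\textbf{Main obstacle.} The delicate point is the bookkeeping: I must simultaneously (i) keep the $\prec$-chain among the $\beta_n+\gamma_n$ so that every finite sum factors as an ordered composition, (ii) ensure the domain condition $\max dom^-(\beta_1)<k_1<k_2<\min dom^+(\gamma_1)$ at the very first term, and (iii) shrink the recurrence tolerance at each stage enough to absorb all previously constructed partial sums under composition. Items (i) and (iii) can pull in tension — shrinking tolerance is a condition on the dynamics, while the $\prec$-chain is a condition on the supports — but since Theorem~\ref{thm:olem1}/\ref{thm:olem4} delivers $\beta,\gamma$ with \emph{both} an arbitrarily large lower bound on the relevant domains (we are free to demand $\max dom^-(\beta)<k_1'$ for any $k_1'$ we like, in particular smaller than $\min dom^-$ of everything chosen so far) \emph{and} arbitrarily small $\varepsilon$, both can be met at once. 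Verifying that the scaled sums $p(\beta_{i_1}+\dots+\beta_{i_s})$ and $q(\gamma_{i_1}+\dots+\gamma_{i_s})$ genuinely retain the domain-emptiness and $\prec$-separation properties (so that Remark~\ref{rem:000} applies) is the one computation I would carry out carefully rather than wave at.
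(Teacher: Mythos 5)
Your proposal is correct and follows essentially the same route as the paper: a nested (Hindman-style) induction in which each new pair $(\beta_{n+1},\gamma_{n+1})$ is obtained from Theorem~\ref{thm:olem4} with a tolerance $\delta_{n+1}$ shrunk via continuity of the finitely many maps $\mathcal{T}_i^{m}$ coming from earlier partial sums, and with support $\prec$-beyond everything already chosen so that Remark~\ref{rem:000} turns finite sums into compositions. The paper's proof is exactly this construction (conditions $(1)$--$(3)$ and the inductive step with $\delta_{n+1}$), with the scaling/$\prec$-preservation verification you flag left implicit.
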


\begin{proof}
Let $\delta>0$ and $x_0$ a point which satisfies the conclusion of Theorem~\ref{thm:olem4}. According to Theorem~\ref{thm:olem4} there exist $\beta_1\in \qat\setminus\zat$ and $\gamma_1\in \zat^\ast$ with $dom^+(\beta_1)=\emptyset=dom^-(\gamma_1),$ $\max dom^-(\beta_1)<k_1<k_2<\min dom^+(\gamma_1)$ such that \\ $(1)\;\;\;\;d(\mathcal{T}^{p\beta_1+q\gamma_1}_i(x_0),x_0)<\delta$ for every $0\leq p,q\leq k,$ $1\leq i\leq l.$

Let $0<\delta_2\leq \delta$ such that \\
$(2)\;\;\;d(x,x_0)<\delta_2\Rightarrow d(\mathcal{T}^{p\beta_1+q\gamma_1}_i(x),x_0)<\delta$ for every $0\leq p,q\leq k,$ $1\leq i\leq l.$

According to Theorem~\ref{thm:olem4} there exist $\beta_2\in \qat\setminus\zat$ and $\gamma_2\in \zat^\ast$ with $\beta_1+\gamma_1\prec \beta_2+\gamma_2,$ $dom^+(\beta_2)=\emptyset=dom^-(\gamma_2)$ such that \\ $(3)\;\;\;\;d(\mathcal{T}^{p\beta_2+q\gamma_2}_i(x_0),x_0)<\delta_2$ for every $0\leq p,q\leq k,$ $1\leq i\leq l.$

The conditions $(1),(2)$ and $(3)$ ensures that

\noindent$(\ast)\;\;\;d(\mathcal{T}^m_i(x_0),x_0)<\delta,$ where $m=p\beta_1+q\gamma_1$ or $p\beta_2+q\gamma_2$ or $p(\beta_1+\beta_2)+q(\gamma_1+\gamma_2)$ for every $0\leq p,q\leq k,$ $1\leq i\leq l.$

Assume that we have found $\beta_1,\ldots,\beta_n,\;\gamma_1,\ldots,\gamma_n$ with $\beta_s+\gamma_s\prec \beta_{s+1}+\gamma_{s+1}$ for every $s=1,\ldots,n-1,$ $dom^+(\beta_s)=\emptyset=dom^-(\gamma_s),$ $1\leq s\leq n$   such that $(\ast)$ holds for $m=p(\beta_{i_1}+\ldots+\beta_{i_s})+q(\gamma_{i_1}+\ldots+\gamma_{i_s})$ for every $0\leq p,q\leq k,$ $1\leq i\leq l$ with $i_1<\ldots<i_s\leq n.$ Let $\delta_{n+1}\leq \delta$ such that $d(x,x_0)<\delta_{n+1}\;\Rightarrow\;d(\mathcal{T}^m_i(x),x_0)<\delta$ for the previous $m,\;1\leq i\leq l.$
According to Theorem~\ref{thm:olem4} there exist $\beta_{n+1}\in \qat\setminus\zat$ and $\gamma_{n+1}\in \zat^\ast$ with $\beta_n+\gamma_n\prec \beta_{n+1}+\gamma_{n+1},$ $dom^+(\beta_{n+1})=\emptyset=dom^-(\gamma_{n+1})$ such that \\ $\;\;\;\;d(\mathcal{T}^{p\beta_{n+1}+q\gamma_{n+1}}_i(x_0),x_0)<\delta_{n+1}$ for every $0\leq p,q\leq k,\;1\leq i\leq l.$ Then, $(\ast)$ holds if we replace $m$ with $m+p\beta_{n+1}+q\gamma_{n+1}$ or $p\beta_{n+1}+q\gamma_{n+1}$ for every $0\leq p,q\leq k.$ Inductively, we have that

$R=\{p(\beta_{i_1}+\ldots+\beta_{i_s})+q(\gamma_{i_1}+\ldots+\gamma_{i_s}),\;0\leq p,q\leq k$ with $i_1<\ldots<i_s\}\subseteq R_{\delta}.$
\end{proof}

\section{Some Applications}

In this last section we will present some applications of Theorem~\ref{thm:olem3} not only to topology but also to diophantine approximations and number theory.

A finite partition of $\qat^l,l\in \nat$ can be considered as a function from $\qat^l$ to a finite set. Analogously to [Fu] (Theorem 2.9 and Lemma 2.11) we extend Theorem~\ref{thm:olem3} from the finite partitions of $\qat^l$ to functions from $\qat^l$ to a compact metric space.

\begin{thm}\label{thm:cor2} Let $l\in \nat,k_1<k_2$ be arbitrary real numbers and let $f:\qat^l\rightarrow X$ be an arbitrary function with values on the compact metric space $X.$ For any $\varepsilon>0,k\in \nat$ and $F\in [\qat^l]^{<\omega}_{>0}$ we can find $\alpha\in \qat^l,\beta\in \qat\setminus \zat$ and $\gamma\in \zat^\ast$ with $dom^+(\beta)=\emptyset=dom^-(\gamma),$ $\max dom^-(\beta)<k_1<k_2<\min dom^+(\gamma)$ such that $diam (f(\alpha+(p\beta+q\gamma)F))<\varepsilon$  for every $0\leq p,q\leq k.$
\end{thm}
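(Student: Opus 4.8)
The plan is to derive Theorem~\ref{thm:cor2} from the finitary partition statement Theorem~\ref{thm:olem3} by a standard compactness/covering argument, exactly analogous to how a Gallai-type theorem for finite colorings is upgraded to a statement about arbitrary maps into a compact metric space. First I would fix $\varepsilon>0$, $k\in\nat$ and $F\in[\qat^l]^{<\omega}_{>0}$. Since $X$ is a compact metric space, I would cover $X$ by finitely many open balls $B(x_1,\varepsilon/2),\ldots,B(x_r,\varepsilon/2)$ for some $r\in\nat$. This induces a finite partition $\qat^l=Q_1\cup\ldots\cup Q_r$ by declaring, for $\widetilde q\in\qat^l$, that $\widetilde q\in Q_i$ iff $f(\widetilde q)\in B(x_i,\varepsilon/2)$ and $f(\widetilde q)\notin B(x_j,\varepsilon/2)$ for $j<i$ (the usual trick to make the cells disjoint). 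Every point of $\qat^l$ lands in exactly one cell since the balls cover $X$.

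Next I would apply Theorem~\ref{thm:olem3} to this partition with the given $l$, $k_1<k_2$, $k$ and configuration $F$. This produces an index $1\le i_0\le r$ and elements $\alpha\in\qat^l$, $\beta\in\qat\setminus\zat$, $\gamma\in\zat^\ast$ with $dom^+(\beta)=\emptyset=dom^-(\gamma)$, $\max dom^-(\beta)<k_1<k_2<\min dom^+(\gamma)$, such that $\alpha+(p\beta+q\gamma)F\subseteq Q_{i_0}$ for every $0\le p,q\le k$. By construction, every element of the set $\alpha+(p\beta+q\gamma)F$ has $f$-value inside $B(x_{i_0},\varepsilon/2)$; hence for each fixed $(p,q)$ with $0\le p,q\le k$, the image $f(\alpha+(p\beta+q\gamma)F)$ is contained in a ball of radius $\varepsilon/2$, so $diam(f(\alpha+(p\beta+q\gamma)F))\le\varepsilon$. (If one wants strict inequality one simply starts with balls of radius $\varepsilon/4$, or invokes Theorem~\ref{thm:olem3} with a slightly smaller $\varepsilon$; this is a harmless adjustment.) That is precisely the asserted conclusion, with the same control on the domains of $\beta$ and $\gamma$ inherited directly from Theorem~\ref{thm:olem3}.

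There is really no serious obstacle here: the content has already been spent in proving Theorem~\ref{thm:olem3} via the rational dynamical system machinery. The only point requiring a moment's care is the bookkeeping that makes the cells $Q_i$ genuinely disjoint (so that one gets a bona fide partition to feed into Theorem~\ref{thm:olem3}), and the trivial factor-of-two in the radius needed to convert "all values in a common ball" into a bound on the diameter. I expect the writeup to be only a few lines. An alternative phrasing, if one prefers to avoid re-deriving the covering each time, is to note that Theorem~\ref{thm:cor2} for $X$ finite (with the discrete metric, and $\varepsilon<1$) is literally Theorem~\ref{thm:olem3}, and then the general compact case follows by the ball-covering reduction above; I would present whichever is cleaner, but the ball-covering argument is self-contained and short, so I would go with that.
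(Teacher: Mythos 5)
Your proposal is correct and follows essentially the same route as the paper: cover the compact space $X$ by finitely many sets of small diameter, pull back to a finite partition of $\qat^l$, and apply Theorem~\ref{thm:olem3}. The paper simply covers $X$ directly by sets $U_i$ with $diam(U_i)<\varepsilon$, which sidesteps the factor-of-two adjustment you mention, but this is an immaterial difference.
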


\begin{proof}
Let $X=\bigcup^r_{i=1}U_i,r\in \nat$ where $diam (U_i)<\varepsilon$ for every $1\leq i\leq r.$ Then $\qat^l=\bigcup^r_{i=1}f^{-1}(U_i).$ We obtain the result applying Theorem~\ref{thm:olem3} to this partition.
\end{proof}

\medskip

We will now give an application of Theorem~\ref{thm:cor2} to diophantine approximations.

\medskip

 Let $\delta$ be an arbitrary real number and $f(q)=e^{i\pi q^2 \delta},\;q\in \qat.$ According to Theorem~\ref{thm:cor2}, for $\varepsilon>0,$ $k\in \nat$ and $k_1<k_2$ arbitrary real numbers there exist $\alpha\in \qat,\beta\in \qat\setminus\zat$ and $\gamma\in \zat^\ast$ with $dom^+(\beta)=\emptyset=dom^-(\gamma),$ $\max dom^-(\beta)<k_1<k_2<\min dom^+(\gamma)$ such that $|f(\alpha)-f(\alpha+(p\beta+q\gamma))|<\varepsilon$ and $|f(\alpha)-f(\alpha+2(p\beta+q\gamma))|<\varepsilon$ for every $0\leq p,q\leq k.$ If $h\equiv h(p,q)=p\beta+q\gamma,$ we have $\;|1-e^{i(2\alpha h+h^2)\pi \delta}|<\varepsilon$ and $|1-e^{i(4\alpha h+4h^2)\pi \delta}|<\varepsilon.$
  Since $\cos(x)\leq 1$ for every $x,$ we have $|1-e^{2ix}|\leq 2|1-e^{ix}|.$

   $2h^2 \pi \delta=[(4\alpha h+4h^2)-2(2\alpha h+h^2)]\pi\delta,$ so \begin{center} $|1-e^{2ih^2\pi\delta}|\leq 2|1-e^{i(2\alpha h+h^2)\pi\delta}|+|1-e^{i(4\alpha h+4h^2)\pi\delta}|<3\varepsilon.$ \end{center}
If we set $\xi\equiv \xi(p,q)=h^2\delta-[h^2\delta]$ for every $0\leq p,q\leq k,$ we have $|1-e^{2\pi i\xi}|<3\varepsilon\;\Leftrightarrow\;2\sin(\pi\xi)<3\varepsilon.$
Using the inequality $\sin(x)\geq \frac{2}{\pi}x$ for $0\leq x\leq\frac{\pi}{2},$ we get:

$(i)$ If $\pi\xi\in [0,\frac{\pi}{2}],$ then for $m\equiv m(p,q)=[h^2\delta]$ we have $|h^2\delta-m|<\varepsilon.$

$(ii)$ If $\pi\xi\in (\frac{\pi}{2},\pi)$ then $\pi-\pi\xi\in (0,\frac{\pi}{2}),$ so, for $m\equiv m(p,q)=[h^2\delta]+1$ we have $|h^2\delta-m|<\varepsilon.$

This implies that there exists $m(p,q)\in \zat$ such that $|\delta(p\beta+q\gamma)^2-m(p,q)|<\varepsilon$ for every $0\leq p,q\leq k.$

 Note that (for $p=0$) we have that for every $\delta$ real number we can solve $|\delta n^2-m|<\varepsilon$ for every $\varepsilon>0$ (a result first proved by Hardy and Littlewood).

\medskip

We will now state and prove the multidimensional version of Theorem~\ref{thm:cor2}.

\begin{thm}\label{cor:olem2} Let $l_1,\ldots,l_s\in \nat,s\in \nat$ and $k_1<k_2$ be arbitrary real numbers. Let $f_1:\qat^{l_1}\rightarrow X_1,\ldots,f_s:\qat^{l_s}\rightarrow X_s$ be $s$ arbitrary functions with values on the compact metric spaces $X_1,\ldots,X_s$ respectively. For any $\varepsilon>0,k\in \nat$ and $F_1\in [\qat^{l_1}]^{<\omega}_{>0},\ldots,F_s\in [\qat^{l_s}]^{<\omega}_{>0}$ we can find $\alpha_i\in \qat^{l_i},1\leq i\leq s,$ $\beta\in \qat\setminus \zat$ and $\gamma\in \zat^\ast$ with $dom^+(\beta)=\emptyset=dom^-(\gamma),$ $\max dom^-(\beta)<k_1<k_2<\min dom^+(\gamma)$ such that $diam (f_i(\alpha_i+(p\beta+q\gamma)F_i))<\varepsilon$  for every $0\leq p,q\leq k,$ $1\leq i\leq s.$
\end{thm}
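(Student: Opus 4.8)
The plan is to deduce Theorem~\ref{cor:olem2} from the single-function case, Theorem~\ref{thm:cor2}, by collapsing the $s$ functions into one product-type function on a single rational vector space. Put $L=l_1+\cdots+l_s$ and identify $\qat^{L}=\qat^{l_1}\times\cdots\times\qat^{l_s}$. Equip $X:=X_1\times\cdots\times X_s$ with the metric $\rho\big((a_1,\dots,a_s),(b_1,\dots,b_s)\big)=\max_{1\le i\le s}d_i(a_i,b_i)$, under which $X$ is again a compact metric space, and define $g\colon\qat^{L}\to X$ by $g(x_1,\dots,x_s)=\big(f_1(x_1),\dots,f_s(x_s)\big)$ for $x_i\in\qat^{l_i}$.

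Next I would package the configurations $F_1,\dots,F_s$ into one finite configuration in $\qat^{L}$. For each $i$ let $F_i'=\{(0,\dots,0,v,0,\dots,0):v\in F_i\}$ be the copy of $F_i$ placed in the $i$-th block of $\qat^{L}$, and set $\widetilde F=F_1'\cup\cdots\cup F_s'\in[\qat^{L}]^{<\omega}_{>0}$. Applying Theorem~\ref{thm:cor2} to $g$, $\widetilde F$, and the given $\varepsilon>0$, $k\in\nat$, $k_1<k_2$, I obtain $\widetilde\alpha=(\alpha_1,\dots,\alpha_s)\in\qat^{L}$, $\beta\in\qat\setminus\zat$ and $\gamma\in\zat^\ast$ with $dom^+(\beta)=\emptyset=dom^-(\gamma)$, $\max dom^-(\beta)<k_1<k_2<\min dom^+(\gamma)$, and $diam\big(g(\widetilde\alpha+(p\beta+q\gamma)\widetilde F)\big)<\varepsilon$ for all $0\le p,q\le k$.

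Finally I would unwind the conclusion. Fix $0\le p,q\le k$ and write $h=p\beta+q\gamma$. For $v\in F_i$ let $w_i(v)\in F_i'\subseteq\widetilde F$ be the corresponding block-$i$ vector; then $\widetilde\alpha+h\,w_i(v)$ agrees with $\widetilde\alpha$ in every block except the $i$-th, where it equals $\alpha_i+hv$, so $g\big(\widetilde\alpha+h\,w_i(v)\big)$ has $j$-th coordinate $f_j(\alpha_j)$ for $j\neq i$ and $i$-th coordinate $f_i(\alpha_i+hv)$. Hence, for $v,v'\in F_i$,
$$d_i\big(f_i(\alpha_i+hv),\,f_i(\alpha_i+hv')\big)=\rho\Big(g\big(\widetilde\alpha+h\,w_i(v)\big),\,g\big(\widetilde\alpha+h\,w_i(v')\big)\Big)\le diam\big(g(\widetilde\alpha+h\widetilde F)\big)<\varepsilon,$$
and taking the supremum over $v,v'\in F_i$ gives $diam\big(f_i(\alpha_i+(p\beta+q\gamma)F_i)\big)<\varepsilon$ for every $0\le p,q\le k$ and $1\le i\le s$, which is the assertion.

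I do not expect a genuine obstacle: the whole content is the choice of $\widetilde F$ as the union of the block-copies $F_i'$ (rather than, say, their product), which is precisely what makes a single diameter bound for $g$ split into the $s$ separate diameter bounds for $f_1,\dots,f_s$; the blocks overlap at most in the zero vector, which is harmless.
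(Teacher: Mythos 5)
Your proposal is correct and follows essentially the same route as the paper: form the product map $f_1\times\cdots\times f_s:\qat^{l_1+\cdots+l_s}\to X_1\times\cdots\times X_s$ and apply Theorem~\ref{thm:cor2}. The only difference is that the paper takes the combined configuration to be the product $F=F_1\times\cdots\times F_s$ (so the diameter of the image is exactly $\max_i diam(f_i(\alpha_i+hF_i))$ in the max metric), whereas you take the union of the block-embedded copies $F_1'\cup\cdots\cup F_s'$; both choices make the single diameter bound split into the $s$ required bounds, so your variant is equally valid.
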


\begin{proof}
Form $f_1\times\ldots\times f_s:\qat^{l_1+\ldots+l_s}\rightarrow X_1\times\ldots \times X_s,$ $F=F_1\times\ldots\times F_s$ and apply Theorem~\ref{thm:cor2}.
\end{proof}

\medskip

Let give an application of Theorem~\ref{cor:olem2} to number theory.

\medskip

 Let $\pi(x)$ be a real polynomial with $\pi(0)=0,\;k\in \nat$ and $k_1<k_2$ arbitrary real numbers. We will show that for every $\varepsilon>0$ there exist $\beta\in\qat\setminus\zat,\;\gamma\in \zat^\ast$ with $dom^+(\beta)=\emptyset=dom^-(\gamma),$ $\max dom^-(\beta)<k_1<k_2<\min dom^+(\gamma)$ and integers $m(p,q),$ $0\leq p,q\leq k$ such that $|\pi(p\beta+q\gamma)-m(p,q)|<\varepsilon$ for every $0\leq p,q\leq k.$

Let $\pi(x)=b_n x^n+\ldots+b_1 x.$ We write $\pi(x)=s_n A_n x^n+\ldots+s_1 A_1 x,$ with $A_r=\sum^r_{j=0}(-1)^j \binom{r}{j} j^r,\;r=1,\ldots,n$ (where $\binom{r}{j}=\frac{r!}{j!(r-j)!}$).

 For every $r=1,\ldots,n$ we set $f_r:\qat\rightarrow \mathbb{R}/\zat$ with $f_r(q)=s_r q^r.$ According to Theorem~\ref{cor:olem2} there exist $\alpha_1,\ldots,\alpha_n\in \qat,\;\beta\in \qat\setminus\zat,\;\gamma\in \zat^\ast$ with $dom^+(\beta)=\emptyset=dom^-(\gamma),$ $\max dom^-(\beta)<k_1<k_2<\min dom^+(\gamma)$ such that \begin{center} $\|f_r(\alpha_r+j(p\beta+q\gamma))-f_r(\alpha_r)\|<\frac{\varepsilon}{2^{n+1}},\;j=1,\ldots,r,\;0\leq p,q\leq k.$ \end{center}

 We can easily prove by induction that $\sum^r_{j=0}(-1)^j\binom{r}{j}(x+jy)^r=A_r y^r,$ so, we have that  $\sum^r_{j=0}(-1)^j\binom{r}{j}f_r(\alpha_r+j(p\beta+q\gamma))=A_r f_r(p\beta+q\gamma)$ for every $0\leq p,q\leq k.$ Since, \begin{center} $\sum^r_{j=0}(-1)^j\binom{r}{j}f_r(\alpha_r)=0$ and $\sum^r_{j=0}\binom{r}{j}=2^r,$ \end{center} we have that $\|A_r f_r(p\beta+q\gamma)\|<\frac{2^r\varepsilon}{2^{n+1}},\;r=1,\ldots,n,$ thus \begin{center} $\|\pi(p\beta+q\gamma)\|<\frac{\varepsilon}{2^{n+1}}(\sum^n_{r=1}2^r)<\varepsilon$ for every $0\leq p,q\leq k.$\end{center}

Note that (for $p=0$) for every real polynomial $\pi(x)$ with $\pi(0)=0$ and $\varepsilon>0,$ we can find integers $m,n$ with $|\pi(n)-m|<\varepsilon$ (a result first proved by Hardy and Weyl).

\medskip

\subsection*{Acknowledgments.} The author wish to thank Professors V. Farmaki and S. Negrepontis for helpful discussions and support during the preparation of this paper. The author also acknowledge partial support from the State Scholarship Foundation of Greece.

{\footnotesize

\noindent
\newline
Andreas Koutsogiannis:
\newline
{\sc Department of Mathematics, Athens University, Panepistemiopolis, 15784 Athens, Greece}
\newline
E-mail address: akoutsos@math.uoa.gr}
\end{document}